\newtheorem{theorem}{Theorem}[section]
\newtheorem{proposition}[theorem]{Proposition}
\newtheorem{corollary}[theorem]{Corollary}
\theoremstyle{definition}
\newtheorem{definition}[theorem]{Definition}
\newtheorem{example}[theorem]{Example}
\theoremstyle{remark}
\newtheorem{remark}[theorem]{Remark}
\numberwithin{equation}{section}
\begin{document}
\begin{center}
\large{\textbf{Characterization of Einstein Poisson warped product space}}
\end{center}
\begin{center}
	{Buddhadev~~Pal and Pankaj Kumar\footnote{The Second author is supported by UGC JRF of India, Ref. No: 1269/(SC)(CSIR-UGC NET DEC. 2016).}}
\end{center}
\vskip 0.2cm
\begin{center}
	Department of Mathematics\\
	Institute of Science\\
	Banaras Hindu University\\
	Varanasi-221005, India.\\
	E-mail: pal.buddha@gmail.com\\
	E-mail: pankaj.kumar14@bhu.ac.in\\
\end{center}
\vskip 0.5cm
\begin{center}
	\textbf{Abstract}
\end{center}
 In this article, we study the problem of the existence and nonexistence of warping function associated with constant scalar curvature on pseudo-Riemannian Poisson warped product space under the assumption that fiber space has constant scalar curvature. We characterize the warping function on Einstein Poisson warped space by taking the various dimensions of base space $B$ (i.e; (1). $dim B=1,$ (2). $dimB\geq2$).\\\\
 \textbf{2010 Mathematics Subject Classification:} 53C15, 53D17.\\\\
\textbf{Key words and phrases:} Einstein manifold, Warped product, Levi-Civita contravariant connection, Poisson structure, pseudo-Riemannian Poisson manifold.
\section{\textbf{Introduction}}
Poisson geometry is developed to provide a powerful technique in symplectic geometry like groupoid/algebroid theory or singularity theory. Poisson manifold considered a phase space in Hamiltonian dynamics.
 Moreover, it is also studied in various branches of mathematics like representation theory, integrable system, non-commutative geometry, quantum groups, etc.\par In \cite{ivs,jpn}, authors studied the Poisson structure on manifolds and provided various information on Poisson manifolds. To the search of covariant connection type notion on Poisson manifold $(M,\Pi)$ associated with a Poisson structure $\Pi$, I. Vaisman \cite{ivs} provided the notion of contravariant connection $\mathcal{D}$. In \cite{rfl}, R. L. Fernandes provided the importance of contravariant connection to study the global properties of Poisson manifold. In \cite{mbo}, M. Boucetta proved the existence of Levi-Civita contravariant connection on Poisson manifold associated with pseudo-Riemannian metric and also introduced the concept of compatibility between a Poisson structure and pseudo-Riemannian metric. In \cite{mbo2}, author introduced the notion of pseudo-Riemannian Poisson manifold by using the concept of compatibility. In \cite{zs}, authors introduced a contravariant analogue of the Laplace operator acting on differential forms.\par
A pseudo-Riemannian manifold ($M$,$\tilde{g}$) of dimension $n>2$ is said to be an Einstein manifold if its Ricci tensor $Ric$ is proportional to the metric $\tilde{g}$ i.e., for some constant $\lambda$ on $M$ $$Ric(X,Y) = \lambda\tilde{g}(X,Y),$$ for every $X,Y\in\mathfrak{X}(M)$. If $\lambda=0,$ then $M$ is called Ricci-flat.\par The notion of warped product manifold was first introduced by Bishop and O'Neill in \cite{lota}, for studying manifolds with negative curvature. It generalizes that of a surface of revolution and has useful application in general theory of relativity. In \cite{jkp,jke}, authors observed that some well-known solutions to Einstein's field equations known in terms of warped products besides they obtain more solutions to Einstein's field equations by using the methods of warped products.\par
Let $(B,\tilde{g_{B}})$ and $(F,\tilde{g_{F}})$ be two pseudo-Riemannian manifolds with dim$(B)=n>0$, dim$(F)=m>0$ and $f$ is a positive smooth function on $B$ then the warped product $M=B \times_f F$ is the product manifold  $B\times F$ furnished with the metric tensor $\tilde{g}^f=\tilde{g_{B}}+f^2\tilde{g_{F}}$ defined for pairs of vector fields $X$, $Y$ on $M=B \times_f F$ by
\begin{equation*}
\tilde{g}^f(X,Y)=\tilde{g_{B}}(\pi_*X,\pi_*Y)+f^2(\pi(.))\tilde{g_{B}}(\sigma_*X,\sigma_*Y),
\end{equation*}
where $\pi$ and $\sigma$ are canonical projections over $B$ and $F$ respectively. $(B,\tilde{g_{B}})$ is known as base space, $(F,\tilde{g_{F}})$ is known as fiber space and $f$ is known as warping function.\par
In \cite{rnm}, authors provided the Poisson structure on product Riemannian manifold and also discussed all geometric properties of product Riemannian Poisson manifold. In \cite{yar}, authors generalized the concept of the preceding statement for warped product of pseudo-Riemannian manifolds equipped with a warped Poisson structure and computed the corresponding contravariant Levi-Civita connection and curvatures. In \cite{sba}, authors have computed the warping functions for a Ricci flat Einstein multiply warped product spaces $M$ with a quarter symmetric connection.\\\par
Motivated from all the above papers we organized our work as follows: In section $2$, first we recall some definitions and relations on Poisson manifold $(M,\Pi)$ associated with contravariant connection $\mathcal{D}$ and remind the existence and uniqueness of contravariant Levi-Civita connection associated with the pair $(\Pi,g^f)$ on $M$, later we provide the relation between contravariant Laplacian operator $\Delta^{\mathcal{D}}$ and Hessian of a smooth function $f$ with respect to contravariant Levi-Civita connection $\mathcal{D}$ associated with the pair $(\Pi,g^f)$ on $M$. Finally, we define the cometric $g^f$ of the warped metric $\tilde{g}^f$ on space $B\times_f F$ associated with bivector $\Pi=\Pi_1+\Pi_2$ and mention Theorem 2.3 for a contravariant warped product of two pseudo-Riemannian Poisson manifolds. In section $3$, we discuss contravariant Levi-Civita connection $\mathcal{D}$, curvature $\mathcal{R}$, Ricci curvature $Ric$ and scalar curvature $S$ on contravariant warped product space $(M=B\times_f F,g^f)$, also for pseudo-Riemannian Poisson warped product space. After that in Theorem 3.7, we prove the existence of constant scalar curvature on pseudo-Riemannian Poisson warped product space $(M=B\times_f F,\Pi,g^f)$ under the assumption that fiber space $F$ has non-zero constant scalar curvature. In section 4, in Theorem 4.1 and Theorem 4.3, we introduce the notion of contravariant Einstein warped product space and Einstein Poisson warped product space respectively. Later we provide Corollary 4.5 which inform that, pseudo-Riemannian Poisson warped product space $(M=B\times_f F,g^f,\Pi)$ is Ricci-flat if and only if $B$ and $F$ are, also verify this by an Example 4.6. Moreover, we prove the existence and nonexistence of warping function $f$ for Einstein Poisson warped product space under by taking the various dimensions of base space $B$ (i.e; (1). $dim B=1,$ (2). $dim B\geq3$).
\section{\textbf{Preliminaries}}
 Let $(M,\Pi)$ is a Poisson manifold where $\Pi$ be the bivector field, an anchor map(sharp map) $\sharp_{\Pi}:T^*M\rightarrow TM$  associated $\Pi$ defined by
$$\beta(\sharp_{\Pi}(\alpha))=\Pi(\alpha,\beta),\qquad\quad\quad\quad\text{for any}\:\alpha,\beta\in T^*M.$$
The Koszul bracket on differential 1-form $\Gamma(T^*M)$ is a bracket $[.,.]_\Pi$ defined by
$$[\alpha,\beta]_\Pi=\mathcal{L}_{\sharp_{\Pi}(\alpha)}\beta-\mathcal{L}_{\sharp_{\Pi}(\beta)}\alpha-d(\Pi(\alpha,\beta)),\qquad\quad\text{for any}\:\alpha,\beta\in\Gamma(T^*M).$$
Moreover, if $\Pi$ is a Poisson tensor (see \cite{ivs}), then Koszul bracket form a Lie bracket and sharp map $\sharp_{\Pi}$ provides a Lie algebra homomorphism $\sharp_{\Pi}:\Gamma(T^*M)\rightarrow \Gamma(TM)$ i.e,
$$\sharp_{\Pi}([\alpha,\beta]_\Pi)=[\sharp_{\Pi}(\alpha),\sharp_{\Pi}(\beta)], \qquad\quad\quad\quad\text{for any}\:\alpha,\beta\in\Gamma(T^*M),$$
whrere $[.,.]$ is the usual Lie bracket on $\Gamma(TM)$.\\

Let $(M,\Pi)$ be a Poisson manifold, the contravariant connection $\mathcal{D}$ associated to $\Pi$ is an $\mathbb{R}$-bilinear map $\mathcal{D}:\Gamma(T^*M)\times\Gamma(T^*M)\rightarrow \Gamma(T^*M)$ given by $(\alpha,\beta)\mapsto\mathcal{D}_\alpha\beta$ such that $$\mathcal{D}_{f\alpha}\beta=f\mathcal{D}_\alpha\beta\quad\text{and}\quad\mathcal{D}_\alpha(f\beta)=f\mathcal{D}_\alpha\beta+\sharp_{\Pi}(\alpha)(f)\beta,$$
for any $\alpha,\beta\in\Gamma(T^*M)$ and for any $f\in\mathcal{C}^\infty(M).$ The curvature and torsion tensor with respect to contravariant derivative $\mathcal{D}$ are formally identical to the covariant case i.e, for any $\alpha,\beta,\gamma\in\Gamma(T^*M)$,
\begin{align}
\mathcal{T}(\alpha,\beta)&=\mathcal{D}_\alpha\beta-\mathcal{D}_\beta\gamma-[\alpha,\beta]_\Pi,\label{e250}\\
\mathcal{R}(\alpha,\beta)\gamma&=\mathcal{D}_\alpha\mathcal{D}_\beta\gamma-\mathcal{D}_\beta\mathcal{D}_\alpha\gamma-\mathcal{D}_{[\alpha,\beta]_\Pi}\gamma,\label{e251}
\end{align}
where $\mathcal{T}$ and $\mathcal{R}$ are $(2,1)$ and $(3,1)$-type tensor respectively and $\mathcal{D}$ is called torsion-free(respectively, flat) if $\mathcal{T}=0$ (respectively, $\mathcal{R}=0$).\\

Let $(M,\tilde{g})$ be a pseudo-Riemannian manifold, the bundle isomorphism associate to $\tilde{g}$ is $b_{\tilde{g}}:TM\rightarrow T^*M$ given by $X\mapsto\tilde{g}(X,.)$ and its inverse $\sharp_{g}$ provides a metric $g$ on cotangent bundle  by $g(\alpha,\beta)=\tilde{g}(\sharp_g(\alpha),\sharp_g(\beta)).$
Here the metric $g$ is called cometric of the metric $\tilde{g}.$ If $\Pi$ is a bivector field on pseudo-Riemannian manifold $(M,\tilde{g})$ and $J:T^*M\rightarrow T^*M$ is the field endomorphism then
$$\Pi(\alpha,\beta)=g(J\alpha,\beta)=-g(\alpha,J\beta).$$\par
Let $(M,\Pi)$ be a Poisson manifold associated with pseudo-Riemannian metric $g$ on $T^*M$, existence and uniqueness of a contravariant connection $\mathcal{D}$ according to \cite{mbo} by
\begin{align}
2g(\mathcal{D}_\alpha\beta,\gamma)=&\sharp_{\Pi}(\alpha)g(\beta,\gamma)+\sharp_{\Pi}(\beta)g(\alpha,\gamma)-\sharp_{\Pi}(\gamma)g(\alpha,\beta)\nonumber\\
+&g([\alpha,\beta]_\Pi,\gamma)-g([\beta,\gamma]_\Pi,\alpha)+g([\gamma,\alpha]_\Pi,\beta),\label{e25}
\end{align}
satisfies following two properties,
\begin{align*}
&\textbf{(i)}.\:\mathcal{D}_\alpha\beta-\mathcal{D}_\beta\alpha=[\alpha,\beta]_\Pi;(\text{torsion-free}),\\
&\textbf{(ii)}.\:\sharp_{\Pi}(\alpha).g(\beta,\gamma)=g(\mathcal{D}_\alpha\beta,\gamma)+g(\beta,\mathcal{D}_\alpha\gamma);(\text{metric condition}),
\end{align*}
 for any $\alpha,\beta,\gamma\in\Gamma(T^*M)$. With the notations above $\mathcal{D}$ is called contravariant Levi-Civita connection associated to pair $(\Pi,g)$ on Poission manifold $(M,\Pi)$. \\
Let $(M,\Pi)$ be the Poisson manifold and $\mathcal{D}$ is contravariant Levi-Civita connection associated to $(\Pi,g)$, for given a point $p\in M$ the Ricci curvature $Ric_p$ and scalar curvature $S_p$ of $M$ at point $p$ given by
\begin{align}
Ric_p(\alpha,\beta)&=\displaystyle\sum_{i=1}^{d}g_p(\mathcal{R}_p(\alpha,\eta_i)\eta_i,\beta),\quad \text{for any}\:\alpha,\beta\in T_p^*M,\label{e27}\\
S_p&=\displaystyle\sum_{i=1}^{d}Ric_p(\eta_i,\eta_i),\label{e270}
\end{align}
where $\{\eta_i\}$ is any local orthonormal basis  of $T_p^*M$ and $Ric$ is a $(2,0)$-type symmetric tensor.\\

Let $\mathcal{D}$ is contravariant Levi-Civita connection associated to $(\Pi,g)$ on Poisson manifold $(M,\Pi)$, then the triplet $(M,g,\Pi)$ is said to be Riemannian Poisson manifold if $\mathcal{D}\Pi=0$ i,e. for any $\alpha,\beta,\gamma\in\Gamma(T^*M)$,
\begin{equation}
\sharp_{\Pi}(\alpha).\Pi(\beta,\gamma)-\Pi(\mathcal{D}_\alpha\beta,\gamma)-\Pi(\beta,\mathcal{D}_\alpha\gamma)=0.
\end{equation}
If $J$ is a field endomorphism on Riemannian Poisson manifold $(M,\Pi,g)$ then $\mathcal{D}J=0$ i.e,
$$\mathcal{D}_\alpha(J\beta)=J\mathcal{D}_\alpha\beta,\quad\quad \text{for any}\:\alpha,\beta\in T^*M.$$\par
Let $\mathcal{D}$ is contravariant Levi-Civita connection associated to $(\Pi,g)$ on Poisson manifold $(M,\Pi)$, the contravariant Laplacian operator associated to $\mathcal{D}$ over any tensor field $T$ on $M$ defined in [\cite{zs}, p. 9] by
\begin{equation}
\Delta^\mathcal{D}(T)=-\displaystyle\sum_{i=1}^{d}\mathcal{D}^2_{\eta_i,\eta_i}T,
\end{equation}
where $\{\eta_i\}$ is any local orthonormal coframe field on $M$. If $f\in C^{\infty}(M)$ then from [\cite{yar}, Proposition 1] and equation $(2.5),$ provides
\begin{equation}
\Delta^\mathcal{D}(f)=-\displaystyle\sum_{i=1}^{d}H_{\Pi}^f(\eta_i,\eta_i)=\displaystyle\sum_{i=1}^{d}g(\mathcal{D}_{\eta_i}Jdf,\eta_i),
\end{equation}
where $\{\eta_i\}$ is any local orthonormal coframe field on $M$.
\subsection{Bivector and cometric on warped product space}
From (\cite{yar},p. 287), Let $\Pi_B$ and $\Pi_F$ are the bivectors field on  Riemannian manifolds $(B,\tilde{g}_B)$ and $(F,\tilde{g}_F)$ with cometrics $g_B$ and $g_F$ respectively. The bivector field on the product space $B\times F$ is a unique bivector field $\Pi=\Pi_B+\Pi_F$ such that
$$\Pi(\alpha_1^h,\beta_1^h)=\Pi_B(\alpha_1,\beta_1)^h,\quad \Pi(\alpha_1^h,\beta_2^v)=0,\quad \Pi(\alpha_2^v,\beta_2^v)=\Pi_F(\alpha_2,\beta_2)^v,$$
for any $\alpha_1,\beta_1\in\Gamma(T^*B)$ and $\alpha_2,\beta_2\in\Gamma(T^*F)$.
\begin{proposition}
Let $\alpha_1,\beta_1\in\Gamma(T^*B)$ and $\alpha_2,\beta_2\in\Gamma(T^*F)$. Let $\alpha=\alpha_1^h+\alpha_2^v$ and $\beta=\beta_1^h+\beta_2^v$, we have
    \begin{align*}
	&\textbf{(a).}\:\sharp_{\Pi}(\alpha)=[\sharp_{\Pi_B}(\alpha_1)]^h+[\sharp_{\Pi_F}(\alpha_2)]^v,\\
	&\textbf{(b).}\:\mathcal{L}_{{\Pi}(\alpha)}\beta=[\mathcal{L}_{{\Pi_B}(\alpha_1)}\beta_1]^h+[\mathcal{L}_{{\Pi_F}(\alpha_2)}\beta_2]^v,\\
	&\textbf{(c).}\:[\alpha,\beta]=[\alpha_1,\beta_1]_{\Pi_B}^h+[\alpha_2,\beta_2]_{\Pi_F}^v.
	\end{align*}
\end{proposition}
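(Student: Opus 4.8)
The plan is to establish \textbf{(a)} directly from the definition of the sharp map, then bootstrap \textbf{(b)} from \textbf{(a)}, and finally derive \textbf{(c)} from both together with the Koszul bracket formula. Throughout I read $\mathcal{L}_{\Pi(\alpha)}$ as the ordinary Lie derivative along the vector field $\sharp_{\Pi}(\alpha)$, and $[\alpha,\beta]$ in \textbf{(c)} as the Koszul bracket $[\alpha,\beta]_\Pi$.

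For \textbf{(a)}, I would test both sides against an arbitrary $1$-form $\gamma=\gamma_1^h+\gamma_2^v$ and use the defining relation $\gamma(\sharp_{\Pi}(\alpha))=\Pi(\alpha,\gamma)$. Expanding $\Pi(\alpha,\gamma)$ by bilinearity and invoking the three defining relations of $\Pi=\Pi_B+\Pi_F$ (namely $\Pi(\alpha_1^h,\gamma_1^h)=\Pi_B(\alpha_1,\gamma_1)^h$, $\Pi(\alpha_1^h,\gamma_2^v)=0=\Pi(\alpha_2^v,\gamma_1^h)$, and $\Pi(\alpha_2^v,\gamma_2^v)=\Pi_F(\alpha_2,\gamma_2)^v$) collapses it to $\Pi_B(\alpha_1,\gamma_1)^h+\Pi_F(\alpha_2,\gamma_2)^v$. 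On the other side, pairing $\gamma$ against $[\sharp_{\Pi_B}(\alpha_1)]^h+[\sharp_{\Pi_F}(\alpha_2)]^v$ and using that a horizontal form annihilates a vertical vector (and conversely) leaves exactly the same two surviving terms via the defining relations of $\sharp_{\Pi_B}$ and $\sharp_{\Pi_F}$. Since $\gamma$ is arbitrary, the two vector fields coincide.

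For \textbf{(b)}, I would substitute the vector field computed in \textbf{(a)} and expand $\mathcal{L}_{\sharp_{\Pi}(\alpha)}(\beta_1^h+\beta_2^v)$ using bilinearity of the Lie derivative in its two slots, producing four terms: the two pure terms $\mathcal{L}_{[\sharp_{\Pi_B}(\alpha_1)]^h}\beta_1^h$ and $\mathcal{L}_{[\sharp_{\Pi_F}(\alpha_2)]^v}\beta_2^v$, together with two mixed terms. The pure terms reduce to $[\mathcal{L}_{\sharp_{\Pi_B}(\alpha_1)}\beta_1]^h$ and $[\mathcal{L}_{\sharp_{\Pi_F}(\alpha_2)}\beta_2]^v$, since on a product the Lie derivative of a horizontal lift along a horizontal lift is the horizontal lift of the Lie derivative (and likewise vertically). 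The crux, and the step I expect to require the most care, is showing that the two mixed terms vanish, e.g.\ $\mathcal{L}_{[\sharp_{\Pi_B}(\alpha_1)]^h}\beta_2^v=0$. I would verify this with Cartan's formula $\mathcal{L}_X=d\,\iota_X+\iota_X\,d$: the contraction $\iota_{X^h}\beta_2^v=0$ because a vertical form kills a horizontal vector, and $\iota_{X^h}d(\beta_2^v)=\iota_{X^h}(d\beta_2)^v=0$ because $d$ commutes with the vertical lift and a purely vertical $2$-form contracted against a horizontal vector vanishes; the symmetric argument disposes of $\mathcal{L}_{[\sharp_{\Pi_F}(\alpha_2)]^v}\beta_1^h=0$.

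Finally, for \textbf{(c)} I would feed the outputs of \textbf{(a)} and \textbf{(b)} into the Koszul bracket formula $[\alpha,\beta]_\Pi=\mathcal{L}_{\sharp_{\Pi}(\alpha)}\beta-\mathcal{L}_{\sharp_{\Pi}(\beta)}\alpha-d(\Pi(\alpha,\beta))$. Part \textbf{(b)} splits the first two Lie-derivative terms into horizontal and vertical pieces, while for the last term I use $\Pi(\alpha,\beta)=\Pi_B(\alpha_1,\beta_1)^h+\Pi_F(\alpha_2,\beta_2)^v$ from \textbf{(a)} together with the fact that $d$ respects lifts, giving $d(\Pi(\alpha,\beta))=[d\,\Pi_B(\alpha_1,\beta_1)]^h+[d\,\Pi_F(\alpha_2,\beta_2)]^v$. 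Collecting the horizontal summands reproduces precisely $[\alpha_1,\beta_1]_{\Pi_B}^h$ and the vertical summands reproduce $[\alpha_2,\beta_2]_{\Pi_F}^v$, which completes the proof.
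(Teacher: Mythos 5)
Your argument is correct, but note that the paper itself offers no proof of this proposition to compare against: it is imported verbatim from Ait Amrane--Nasri--Zeglaoui (reference \cite{yar}, ``From (\cite{yar}, p.~287)''), so you have supplied a self-contained argument where the paper relies on citation. Your route is the natural one and each step checks out: in \textbf{(a)} testing against $\gamma=\gamma_1^h+\gamma_2^v$ suffices because lifted covectors span each cotangent space $T^*_{(b,q)}(B\times F)\cong T^*_bB\oplus T^*_qF$ pointwise, so equality of the two vector fields follows; in \textbf{(b)} the pure terms reduce correctly because $[\sharp_{\Pi_B}(\alpha_1)]^h$ is $\pi$-related to $\sharp_{\Pi_B}(\alpha_1)$ and $\beta_1^h=\pi^*\beta_1$, whence $\mathcal{L}$ commutes with the pullback, and your Cartan-formula treatment of the mixed terms is sound (one can kill them even faster by observing that $[\sharp_{\Pi_B}(\alpha_1)]^h$ is $\sigma$-related to the zero field on $F$, so $\mathcal{L}_{[\sharp_{\Pi_B}(\alpha_1)]^h}\sigma^*\beta_2=\sigma^*\mathcal{L}_0\beta_2=0$); and in \textbf{(c)} the identity $d(g\circ\pi)=\pi^*(dg)$ justifies splitting $d(\Pi(\alpha,\beta))$ into lifted pieces, after which the horizontal and vertical summands assemble into the two Koszul brackets. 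Your readings of the statement's notational shorthands ($\mathcal{L}_{\Pi(\alpha)}$ for $\mathcal{L}_{\sharp_\Pi(\alpha)}$, and $[\alpha,\beta]$ for $[\alpha,\beta]_\Pi$) are the intended ones, and your ordering (a)~$\Rightarrow$~(b)~$\Rightarrow$~(c) correctly tracks the logical dependencies, since the Koszul bracket is built from exactly the ingredients computed in the first two parts.
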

From (\cite{ddo},p. 23), we recall the cometric $g^f$ of the warped metric $$\tilde{g}^f=\tilde{g_{B}}+f^2\tilde{g_{F}},$$
on product space $B\times F$
where $f$ be a positive smooth function on $B$.
The warped metric $\tilde{g}^f$ explicitly can be written as,
for any $X_1,Y_1\in\Gamma(TB)$ and $X_2,Y_2\in\Gamma(TF)$,
\begin{eqnarray}
\left\{
\begin{array}{ll}
\tilde{g}^f(X_1^h,Y_1^h)=\tilde{g}_B(X_1,Y_1)^h,\\
\tilde{g}^f(X_1^h,Y_2^v)=\tilde{g}_F(X_2^v,Y_1^h)=0,\\
\tilde{g}^f(X_2^v,Y_2^v)=(f^h)^2\tilde{g}_F(X_2,Y_2)^v.
\end{array}
\right.
\end{eqnarray}
With the above notation, we have the cometric $g$ of the metric $\tilde{g}$ is
$$g^f=g_B^h+\frac{1}{(f^h)^2}g_F^v.$$
Equivalently, for any $\alpha_1,\beta_1\in\Gamma(T^*B)$ and $\alpha_2,\beta_2\in\Gamma(T^*F)$,
we have
\begin{eqnarray}
\left\{
\begin{array}{ll}
g^f(\alpha_1^h,\beta_1^h)=g_B(\alpha_1,\beta_1)^h,\\
g^f(\alpha_1^h,\beta_2^v)=g_B(\alpha_2^v,\beta_1^h)=0,\\
g^f(\alpha_2^v,\beta_2^v)=\frac{1}{(f^h)^2}g_F(\alpha_2,\beta_2)^v.
\end{array}
\right.
\end{eqnarray}
\begin{definition}
The ordered pair $(M=B\times_f F, g^f)$ called contravariant warped product space of the warped product space $(M=B\times_f F, \tilde{g}^f)$.
\end{definition}
Now, we will state theorem which informs that under what condition contravariant warped product of two pseudo-Riemannian Poisson manifold become pseudo-Riemannian Poisson manifold. If $\Pi=\Pi_B+\Pi_F$ then from Theorem 5.2 of (\cite{yar}, p. 249), we have
\begin{theorem}
	If $f$ is a Casimir function, then the triple $(B\times_f F,g^f,\Pi)$ is a pseudo-Riemannian Poisson warped product space if and only if $(B,g_B,\Pi_B)$ and $(F,g_F,\Pi_F)$ both are pseudo-Riemannian Poisson manifold.
\end{theorem}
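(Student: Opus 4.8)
The plan is to unwind the definition of a pseudo-Riemannian Poisson manifold, namely the condition $\mathcal{D}\Pi=0$, and to show that on the warped product it decouples into the corresponding conditions on $B$ and on $F$. Writing the defining identity in the form
\begin{equation*}
\sharp_{\Pi}(\alpha).\Pi(\beta,\gamma)-\Pi(\mathcal{D}_\alpha\beta,\gamma)-\Pi(\beta,\mathcal{D}_\alpha\gamma)=0,
\end{equation*}
and using $\mathbb{R}$-bilinearity together with the splitting $\alpha=\alpha_1^h+\alpha_2^v$, it suffices to verify this identity when $\alpha,\beta,\gamma$ each run through purely horizontal and purely vertical lifts. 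First I would record the meaning of the Casimir hypothesis: since $f$ is a function on $B$, $df$ is horizontal and Proposition 2.1(a) gives $\sharp_{\Pi}(df)=[\sharp_{\Pi_B}(df)]^h$, so $f$ being Casimir on $M$ is equivalent to $\sharp_{\Pi_B}(df)=0$ on $B$. By antisymmetry of $\Pi_B$ this forces $\sharp_{\Pi_B}(\alpha_1)(\phi\circ f)=0$ for every smooth $\phi$ and every $\alpha_1\in\Gamma(T^*B)$; in particular every horizontal Hamiltonian field annihilates the warping factor $1/(f^h)^2$.

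The main computation is to express the contravariant Levi-Civita connection $\mathcal{D}$ of $(\Pi,g^f)$ on $M$ in terms of $\mathcal{D}^B$ and $\mathcal{D}^F$. I would feed the cometric $g^f=g_B^h+\tfrac{1}{(f^h)^2}g_F^v$ and the bracket and sharp-map decompositions of Proposition 2.1 into the Koszul-type formula \eqref{e25}. The crucial point is that the only place the warping function enters the right-hand side of \eqref{e25} is through differentiation of $\tfrac{1}{(f^h)^2}$ by terms of the form $\sharp_{\Pi}(\cdot)$; by the previous paragraph each such term vanishes under the Casimir assumption. Consequently the connection splits cleanly, with
\begin{equation*}
\mathcal{D}_{\alpha_1^h}\beta_1^h=(\mathcal{D}^B_{\alpha_1}\beta_1)^h,\qquad \mathcal{D}_{\alpha_2^v}\beta_2^v=(\mathcal{D}^F_{\alpha_2}\beta_2)^v,
\end{equation*}
and with the mixed combinations reducing to zero, again because the coupling terms are precisely the $\sharp_{\Pi}(\cdot)\big(1/(f^h)^2\big)$ contributions that the Casimir condition kills.

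With this splitting in hand, I would substitute back into $\mathcal{D}\Pi=0$ and use that $\Pi$ pairs nontrivially only on matching factors, $\Pi(\alpha_1^h,\beta_1^h)=\Pi_B(\alpha_1,\beta_1)^h$ and $\Pi(\alpha_2^v,\beta_2^v)=\Pi_F(\alpha_2,\beta_2)^v$, the mixed pairings vanishing. A term-by-term check then shows that the purely horizontal instance of the identity is exactly the pullback of $\mathcal{D}^B\Pi_B=0$, the purely vertical instance is the pullback of $\mathcal{D}^F\Pi_F=0$, and every instance involving a mixed index is automatically satisfied. Reading the equivalence in both directions yields that $(M,g^f,\Pi)$ is a pseudo-Riemannian Poisson warped product space if and only if $(B,g_B,\Pi_B)$ and $(F,g_F,\Pi_F)$ are both pseudo-Riemannian Poisson manifolds.

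I expect the genuine obstacle to be the second step: establishing the clean splitting of $\mathcal{D}$ from \eqref{e25}, and in particular controlling the vertical-vertical block, where the cometric itself carries the factor $1/(f^h)^2$. One must verify that after all the cancellations forced by $\sharp_{\Pi_B}(df)=0$ no residual derivative-of-$f$ terms survive, so that $\mathcal{D}_{\alpha_2^v}\beta_2^v$ is the bare lift $(\mathcal{D}^F_{\alpha_2}\beta_2)^v$. Once this is secured the decoupling of $\mathcal{D}\Pi=0$ is routine.
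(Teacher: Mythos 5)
Your proposal is essentially correct, but there is no in-paper proof to compare it against: the paper states Theorem 2.3 without argument, importing it verbatim as Theorem 5.2 of \cite{yar}. Measured against the paper's own later computations, your route is exactly the one the result rests on. The splitting of the connection you claim under the Casimir hypothesis is Proposition 3.1 specialized to $J_1df=0$ (the paper records ``$f$ is Casimir on $B$ if and only if $J_1df=0$'' in the proof of Corollary 3.4): the vertical--vertical correction $-\frac{1}{(f^h)^3}g_F(\alpha_2,\beta_2)^v(J_1df)^h$ and the mixed term $\frac{1}{f^h}g_B(J_1df,\alpha_1)^h\beta_2^v$ both vanish. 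The difficulty you flag in the vertical block resolves as you predict: when all three arguments of the Koszul formula (2.3) are vertical, the undifferentiated factor $1/(f^h)^2$ multiplies every term uniformly and cancels when one solves for $\mathcal{D}_{\alpha_2^v}\beta_2^v$ against vertical covectors, while the only derivative of $f$ enters through $\sharp_{\Pi}(\gamma_1^h)\big(1/(f^h)^2\big)$, which is a multiple of $\sharp_{\Pi_B}(\gamma_1)(f)=-\gamma_1(\sharp_{\Pi_B}(df))=0$. Given the splitting, your case-by-case decoupling of $\mathcal{D}\Pi=0$ into the horizontal lift of $\mathcal{D}^B\Pi_B=0$ and the vertical lift of $\mathcal{D}^F\Pi_F=0$, with all mixed instances trivially satisfied and the converse obtained from surjectivity of the projections, is sound.

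One genuine omission remains: being a pseudo-Riemannian Poisson manifold requires $\Pi$ (respectively $\Pi_B$, $\Pi_F$) to be a Poisson tensor in addition to $\mathcal{D}\Pi=0$, and your argument addresses only the compatibility condition. You should add the routine observation that for the product bivector $\Pi=\Pi_B+\Pi_F$ the Jacobi condition also decouples: by Proposition 2.1(c) the Koszul bracket of $\Pi$ restricts on horizontal (respectively vertical) lifts to the lift of the Koszul bracket of $\Pi_B$ (respectively $\Pi_F$), with mixed brackets vanishing, so $\Pi$ is Poisson if and only if $\Pi_B$ and $\Pi_F$ are. With that sentence added, your proof is complete, and it supplies an argument the paper itself does not give.
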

\section{\textbf{The contravariant Levi-Civita connection and curvatures on contravariant warped product space with product bivector field}}
Throughout we will denote $\mathcal{D}^B$ and $\mathcal{D}^F$ for contravariant Levi-Civita connection on the Poisson manifolds $(B,\Pi_B)$ and $(F,\Pi_F)$ respectively. Accordingly, we will notify $\mathcal{R}_B$, $Ric_B$, $S_B$ etc. for corresponding curvatures, Ricci curvatures, scalar curvatures etc.\par
In the following two propositions, we have to calculate the contravariant Levi-Civita connection $\mathcal{D}$ associated with the pair $(g^f,\Pi)$ $(\text{where}\  g^f=g_B^h+\frac{1}{(f^h)^2}g_F^v\ \text{and}\ \Pi=\Pi_1+\Pi_2)$ and corresponding curvature $\mathcal{R}$ on contravariant warped product space $(M=B\times_f F, g^f)$ respectively.
\begin{proposition}
	Let $(M=B\times_f F, g^f)$ be a contravariant warped product space then
	for any $\alpha_1,\beta_1\in\Gamma(T^*B)$ and $\alpha_2,\beta_2\in\Gamma(T^*F)$, we have
	\begin{align*}
	\textbf{(a).}\:\mathcal{D}_{\alpha_1^h}\beta_1^h&=(\mathcal{D}_{\alpha_1}^B\beta_1)^h,\\
	\textbf{(b).}\:\mathcal{D}_{\alpha_2^v}\beta_2^v&=(\mathcal{D}_{\alpha_2}^F\beta_2)^v-\frac{1}{(f^h)^3}g_F(\alpha_2,\beta_2)^v(J_1df)^h,\\
	\textbf{(c).}\:\mathcal{D}_{\alpha_1^h}\beta_2^v&=\mathcal{D}_{\beta_2^v}\alpha_1^h=\frac{1}{f^h}g_B(J_1df,\alpha_1)^h\beta_2^v.
	\end{align*}
\end{proposition}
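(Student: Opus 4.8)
The plan is to obtain all three identities from the Koszul-type formula \eqref{e25}, which determines the contravariant Levi-Civita connection of the pair $(\Pi,g^f)$. Since $g^f$ is nondegenerate and, by its block form, makes horizontal and vertical coforms mutually orthogonal, a form $\mathcal{D}_\alpha\beta$ is pinned down once its $g^f$-pairings with every $\gamma_1^h$ and every $\gamma_2^v$ are known. So for each of (a), (b), (c) I would substitute the given $\alpha,\beta$ into the right-hand side of \eqref{e25}, evaluate it separately on a horizontal test form $\gamma=\gamma_1^h$ and on a vertical one $\gamma=\gamma_2^v$, and then read off $\mathcal{D}_\alpha\beta$ by comparing with the claimed expressions using the cometric values $g^f(\alpha_1^h,\beta_1^h)=g_B(\alpha_1,\beta_1)^h$, $g^f(\alpha_1^h,\beta_2^v)=0$ and $g^f(\alpha_2^v,\beta_2^v)=\tfrac{1}{(f^h)^2}g_F(\alpha_2,\beta_2)^v$.

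The computational tools are Proposition 2.1, which splits $\sharp_\Pi$ and the Koszul bracket along the two factors (in particular every mixed bracket $[\alpha_1^h,\beta_2^v]_\Pi$ vanishes), together with two elementary facts about the warping function: a vertical field annihilates $f^h$ and every function pulled back from $B$, while a horizontal field annihilates every function pulled back from $F$. These facts, combined with the vanishing of the cross metric $g^f(\cdot^h,\cdot^v)$, eliminate most of the six terms of \eqref{e25} in each case. The one structural identity I would isolate and reuse is $\sharp_{\Pi_B}(\gamma_1)(f)=\Pi_B(\gamma_1,df)=-g_B(\gamma_1,J_1df)$, which follows from the anchor relation and $\Pi_B(\alpha,\beta)=-g_B(\alpha,J_1\beta)$; this is what turns each derivative of the warping factor $\tfrac{1}{(f^h)^2}$ into the $J_1df$ terms that appear in (b) and (c).

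Carrying this out: for (a) every surviving term involves only horizontal data, so the horizontal test reproduces the base Koszul formula and gives $(\mathcal{D}^B_{\alpha_1}\beta_1)^h$, while the vertical test vanishes term by term. For (b), the vertical test makes all six terms share the common factor $\tfrac{1}{(f^h)^2}$ (the sharp maps involved are vertical and cannot reach $f^h$), so it collapses to $\tfrac{1}{(f^h)^2}$ times the fiber Koszul formula and yields $(\mathcal{D}^F_{\alpha_2}\beta_2)^v$; the horizontal test leaves only $-\sharp_\Pi(\gamma_1^h)g^f(\alpha_2^v,\beta_2^v)$, and differentiating $\tfrac{1}{(f^h)^2}$ through the isolated identity above produces exactly the correction $-\tfrac{1}{(f^h)^3}g_F(\alpha_2,\beta_2)^v(J_1df)^h$. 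For (c), the equality $\mathcal{D}_{\alpha_1^h}\beta_2^v=\mathcal{D}_{\beta_2^v}\alpha_1^h$ is immediate from torsion-freeness (property (i)) because $[\alpha_1^h,\beta_2^v]_\Pi=0$; the horizontal test then vanishes, and the vertical test, after again differentiating the warping factor, delivers $\tfrac{1}{f^h}g_B(J_1df,\alpha_1)^h\beta_2^v$.

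I expect the only real difficulty to be bookkeeping rather than conception: keeping straight which of the six Koszul terms survive in each of the sub-cases, getting the sign and coefficient right when differentiating $\tfrac{1}{(f^h)^2}$ (where $\partial(f^{-2})=-2f^{-3}\partial f$ introduces the extra power of $f^h$), and consistently rewriting $\sharp_{\Pi_B}(\gamma_1)(f)$ as $g_B(\gamma_1,J_1df)$ so that, after contracting against the test form and invoking the symmetry of $g_B$, the result matches the proposed right-hand side coefficient for coefficient.
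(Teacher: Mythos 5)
Your proposal is correct and takes essentially the same route as the paper's own proof: both substitute into the Koszul-type formula (2.3), test the result separately against horizontal and vertical coforms using the block relations (2.10) and the splitting of Proposition 2.1 (all mixed brackets and mixed pairings vanish), and deduce (c) from torsion-freeness via $[\alpha_1^h,\beta_2^v]_{\Pi}=0$ before computing as in (a) and (b). Your explicit isolation of the identity $\sharp_{\Pi_B}(\gamma_1)(f)=\Pi_B(\gamma_1,df)=-g_B(\gamma_1,J_1df)$, with the factor $-2f^{-3}$ from differentiating $(f^h)^{-2}$, is exactly the (implicit) step by which the paper's computation produces the $J_1df$ terms, and your sign bookkeeping matches the stated formulas.
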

\begin{proof}
	Let $\alpha_1,\beta_1,\gamma_1\in\Gamma(T^*B)$, $\alpha_2,\beta_2,\gamma_2\in\Gamma(T^*F)$ and $\alpha=\alpha_1^h+\alpha_2^v$,  $\beta=\beta_1^h+\beta_2^v$, $\gamma=\gamma_1^h+\gamma_2^v$ then from equation $(2.3),$ we have
	\begin{align}
	2g^f(\mathcal{D}_\alpha\beta,\gamma)&=\sharp_{\Pi}(\alpha)g^f(\beta,\gamma)+\sharp_{\Pi}(\beta)g^f(\alpha,\gamma)-\sharp_{\Pi}(\gamma)g^f(\alpha,\beta)\nonumber\\
	&+g^f([\alpha,\beta]_{\Pi},\gamma)-g^f([\beta,\gamma]_{\Pi},\alpha)+g^f([\gamma,\alpha]_{\Pi},\beta).
	\end{align}
	$(a).\:$ Putting $\alpha=\alpha_1^h$, $\beta=\beta_1^h$ and $\gamma=\gamma_1^h$ in $(3.1)$, after that using the system of equations $(2.10)$ and Proposition 2.1, we obtain
	\begin{equation*}
	2g^f(\mathcal{D}_{\alpha_1^h}\beta_1^h,\gamma_1^h)=2(g_B(\mathcal{D}_{\alpha_1}^B\beta_1,\gamma_1))^h.
	\end{equation*}
	Again applying the first equation of $(2.10)$ in this equation, provides
	\begin{equation*}
	g^f(\mathcal{D}_{\alpha_1^h}\beta_1^h,\gamma_1^h)=g^f((\mathcal{D}_{\alpha_1}^B\beta_1)^h,\gamma_1^h).
	\end{equation*}
	Similarly, taking $\alpha=\alpha_1^h$, $\beta=\beta_1^h$ and $\gamma=\gamma_2^v,$ we get $g^f(\mathcal{D}_{\alpha_1^h}\beta_1^h,\gamma_2^v)=0$ and then
	\begin{equation*}
	g^f(\mathcal{D}_{\alpha_1^h}\beta_1^h,\gamma_1^v)=g^f((\mathcal{D}_{\alpha_1}^B\beta_1)^h,\gamma_2^v).
	\end{equation*}
	The result follows.\\\\
	$(b).\:$ Putting $\alpha=\alpha_2^v$, $\beta=\beta_2^v$ and $\gamma=\gamma_1^h$ in $(3.1)$, after that using the system of equations $(2.10)$ and Proposition 2.1, we have
	\begin{align*}
	g^f(\mathcal{D}_{\alpha_2^v}\beta_2^v,\gamma_1^h)&=\frac{1}{2}\{g^f([\alpha_2^v,\beta_2^v]_{\Pi},\gamma_1^h)-\sharp_{\Pi}(\gamma_1^h)g^f(\alpha_2^v,\beta_2^v)\}\\
	&=-\frac{1}{(f^h)^3}g_F(\alpha_2,\beta_2)^vg_B(J_1df,\gamma_1)^h\\
	&=g^f(-\frac{1}{(f^h)^3}g_F(\alpha_2,\beta_2)^v(J_1df)^h,\gamma_1^h).
	\end{align*} 
	Similarly, taking $\alpha=\alpha_2^v$, $\beta=\beta_2^v$ and $\gamma=\gamma_2^v$, we get
	\begin{equation*}
	g^f(\mathcal{D}_{\alpha_2^v}\beta_2^v,\gamma_2^v)=g^f((\mathcal{D}_{\alpha_2}\beta_2)^v,\gamma_2^v).
	\end{equation*}
	The result follows.\\\\
	$(c).\:$ Since $\mathcal{D}$ is torsion-free therefore $\mathcal{D}_{\alpha_1^h}\beta_2^v-\mathcal{D}_{\beta_2^v}\alpha_1^h=[\alpha_1^h,\beta_2^v]_{\Pi}$. By Proposition 2.2 we have $[\alpha_1^h,\beta_2^v]_{\Pi}=0$, thus $\mathcal{D}_{\alpha_1^h}\beta_2^v=\mathcal{D}_{\beta_2^v}\alpha_1^h$. The remaining proof is similar to the proofs of $(a)$ and $(b)$.
\end{proof}
\begin{proposition}Let $(M=B\times_f F, g^f)$ be a contravariant warped product space then for any $\alpha_1,\beta_1,\gamma_1\in\Gamma(T^*B)$, $\alpha_2,\beta_2,\gamma_2\in\Gamma(T^*F)$ and $\gamma=\gamma_1^{h}+\gamma_2^{v}$, we have
\begin{align*}
\textbf{(a).}\:\mathcal{R}(\alpha_1^{h},\beta_1^{h})\gamma&=\big[\mathcal{R}_B(\alpha_1,\beta_1)\gamma_1\big]^h\\
&+\frac{1}{f^h}\big[g_B(\mathcal{D}_{\alpha_1}^BJ_1df,\beta_1)-g_B(\mathcal{D}_{\beta_1}^BJ_1df,\alpha_1)\big]^h\gamma_2^v\\
&+\frac{1}{(f^h)^2}\big[\mathcal{D}_{\beta_1}^B(f)g_B(J_1df,\alpha_1)-\mathcal{D}_{\alpha_1}^B(f)g_B(J_1df,\beta_1)\big]^h\gamma_2^v,\\
\textbf{(b).}\:\mathcal{R}(\alpha_1^{h},\beta_2^{v})\gamma_1^{h}&=\frac{1}{(f^h)^{2}}\big[g_B(J_1df,\alpha_1)g_B(J_1df,\gamma_1)\big]^h\beta_2^{v}+g_B(\mathcal{D}_{\alpha_1}^B(\frac{J_1df}{f}),\gamma_1)^h\beta_2^{v},\\
\textbf{(c).}\:\mathcal{R}(\alpha_1^{h},\beta_2^{v})\gamma_2^v&=-g_F(\beta_2,\gamma_2)^v\Big(\frac{1}{f^3}(\mathcal{D}_{\alpha_1}^BJ_1df)+\frac{2}{f^4}g_B(J_1df,\alpha_1)J_1df\Big)^h,\\
\textbf{(d).}\:\mathcal{R}(\alpha_2^{v},\beta_2^{v})\gamma_1^h&=0,\\
\textbf{(e).}\:\mathcal{R}(\alpha_2^{v},\beta_2^{v})\gamma_2^v&=\big[\mathcal{R}_B(\alpha_2,\beta_2)\gamma_2\big]^h+\Big(\frac{||J_1df||_B^{2}}{f^4}\Big)^h\big[g_F(\alpha_2,\gamma_2)\beta_2-g_F(\beta_2,\gamma_2)\alpha_2\big]^v.
\end{align*}
\end{proposition}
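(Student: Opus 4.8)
The plan is to substitute the connection formulas of Proposition 3.1 into the definition $(2.2)$ of the contravariant curvature, $\mathcal{R}(\alpha,\beta)\gamma=\mathcal{D}_\alpha\mathcal{D}_\beta\gamma-\mathcal{D}_\beta\mathcal{D}_\alpha\gamma-\mathcal{D}_{[\alpha,\beta]_\Pi}\gamma$, and to reduce every term by repeated use of the defining Leibniz rule $\mathcal{D}_\alpha(u\beta)=u\,\mathcal{D}_\alpha\beta+\sharp_\Pi(\alpha)(u)\,\beta$ together with the anchor and bracket decompositions of Proposition 2.1. Three structural facts keep the bookkeeping tractable: (i) the mixed bracket vanishes, $[\alpha_1^h,\beta_2^v]_\Pi=0$, so all horizontal--vertical contributions enter only through the Leibniz rule; (ii) the anchor splits as $\sharp_\Pi(\alpha_1^h+\alpha_2^v)=[\sharp_{\Pi_B}(\alpha_1)]^h+[\sharp_{\Pi_F}(\alpha_2)]^v$, so a horizontal anchor differentiates only functions pulled back from $B$ and a vertical anchor only functions on $F$; and (iii) $f$ and $J_1df$ live on $B$, with the basic identity $\sharp_{\Pi_B}(\alpha_1)(f)=-g_B(J_1df,\alpha_1)$ supplying the scalar coefficients that appear in the final formulas.

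I would begin with (a), splitting $\gamma=\gamma_1^h+\gamma_2^v$ by $\mathbb{R}$-linearity. For the $\gamma_1^h$ component, Proposition 3.1(a) converts every $\mathcal{D}$ into a horizontal lift of $\mathcal{D}^B$ and the bracket into $[\alpha_1,\beta_1]_{\Pi_B}^h$, so this part collapses to $[\mathcal{R}_B(\alpha_1,\beta_1)\gamma_1]^h$. The $\gamma_2^v$ component is the delicate one: Proposition 3.1(c) gives $\mathcal{D}_{\beta_1^h}\gamma_2^v=\frac{1}{f^h}g_B(J_1df,\beta_1)^h\gamma_2^v$, and expanding $\mathcal{D}_{\alpha_1^h}$ of this by the Leibniz rule makes the anchor act on the scalar coefficient through $\sharp_{\Pi_B}(\alpha_1)(f)=-g_B(J_1df,\alpha_1)$ and on $g_B(J_1df,\beta_1)$ through metric compatibility of $\mathcal{D}^B$. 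After antisymmetrizing in $(\alpha_1,\beta_1)$, the terms of the form $g_B(J_1df,\mathcal{D}_{\alpha_1}^B\beta_1)$ combine with the bracket contribution $\mathcal{D}_{[\alpha_1,\beta_1]_{\Pi_B}^h}\gamma_2^v$ and cancel by torsion-freeness of $\mathcal{D}^B$, leaving exactly the $\frac{1}{f}$ term carrying $\mathcal{D}^B J_1df$ and the $\frac{1}{f^2}$ term built from $\sharp_{\Pi_B}(\cdot)(f)$ that appear in the stated formula.

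Cases (b)--(e) run on the same engine with different inputs. In (b) and (c) the outer derivative lands on a coefficient built from $\frac{J_1df}{f}$, so the groupings $g_B(\mathcal{D}_{\alpha_1}^B(\frac{J_1df}{f}),\gamma_1)$ and, in (c), the combination $\frac{1}{f^3}\mathcal{D}_{\alpha_1}^BJ_1df+\frac{2}{f^4}g_B(J_1df,\alpha_1)J_1df$ emerge after expanding $\mathcal{D}_{\alpha_1}^B(\frac{1}{f}J_1df)$ by the Leibniz rule on $B$ and matching the $\sharp_{\Pi_B}(\alpha_1)(f)=-g_B(J_1df,\alpha_1)$ pieces; the $F$-covariant parts cancel between $\mathcal{D}_{\alpha_1^h}\mathcal{D}_{\beta_2^v}$ and $\mathcal{D}_{\beta_2^v}\mathcal{D}_{\alpha_1^h}$. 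Case (d) is where the bracket term is decisive: the antisymmetric combination of the two $\mathcal{D}^F$-contributions produces exactly the fiber torsion $[\alpha_2,\beta_2]_{\Pi_F}$ (the $g_F$-symmetric pieces canceling), which is annihilated by $\mathcal{D}_{[\alpha_2^v,\beta_2^v]_\Pi}\gamma_1^h$, giving $0$. In (e) the fiber curvature $[\mathcal{R}_F(\alpha_2,\beta_2)\gamma_2]^v$ arises from the $\mathcal{D}^F$ parts of Proposition 3.1(b), while the term $\frac{||J_1df||_B^2}{f^4}$ appears precisely when $\mathcal{D}_{\alpha_2^v}$ differentiates the horizontal factor $(J_1df)^h$ sitting inside $\mathcal{D}_{\beta_2^v}\gamma_2^v$: Proposition 3.1(c) applied with the $B$-form $J_1df$ yields $\frac{1}{f^h}g_B(J_1df,J_1df)^h\alpha_2^v$.

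The main obstacle I anticipate is organizational rather than conceptual: each case generates several scalar-coefficient derivative terms from $\sharp_\Pi$ acting on $\frac{1}{f}$, on $g_B(J_1df,\cdot)$, and on $g_F$, and the real work is to antisymmetrize these correctly in $(\alpha,\beta)$ and to recognize the surviving combinations as covariant derivatives $\mathcal{D}_{\alpha_1}^BJ_1df$, as fiber torsions, or as norm expressions $||J_1df||_B^2$. The single simplification that controls the proliferation of cross-terms is that $f$ is pulled back from $B$, so every vertical anchor $\sharp_{\Pi_F}(\alpha_2)$ kills $f$ and all functions of $f$; this is exactly what forces the potential cross-terms in (c)--(e) to drop and leaves only the displayed terms.
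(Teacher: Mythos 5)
Your proposal is correct and follows essentially the same route as the paper's own proof: both substitute the connection formulas of Proposition 3.1 into the curvature definition case by case, use Proposition 2.1 to split anchors and brackets (with $[\alpha_1^h,\beta_2^v]_{\Pi}=0$), let the identity $\sharp_{\Pi_B}(\alpha_1)(f)=-g_B(J_1df,\alpha_1)$ supply the scalar coefficients, and rely on the same cancellations (torsion-freeness against the bracket term in (a) and (d), metric compatibility in (e)). Incidentally, your identification of the fiber term in (e) as $[\mathcal{R}_F(\alpha_2,\beta_2)\gamma_2]^v$ is the correct reading; the $\big[\mathcal{R}_B(\alpha_2,\beta_2)\gamma_2\big]^h$ appearing in the stated proposition is a typo.
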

\begin{proof}
	Let $\alpha_1,\beta_1,\gamma_1\in\Gamma(T^*B)$, $\alpha_2,\beta_2,\gamma_2\in\Gamma(T^*F)$ and $\alpha=\alpha_1^h+\alpha_2^v$,  $\beta=\beta_1^h+\beta_2^v$, $\gamma=\gamma_1^h+\gamma_2^v$ then
	\begin{equation}
	\mathcal{R}(\alpha,\beta)\gamma=\mathcal{D}_\alpha\mathcal{D}_\beta\gamma-\mathcal{D}_\beta\mathcal{D}_\alpha\gamma-\mathcal{D}_{[\alpha,\beta]_{\Pi}}\gamma.
	\end{equation}
	$(a).\:$ Putting $\alpha=\alpha_1^h$, $\beta=\beta_1^h$ and $\gamma=\gamma_1^h$ in $(3.2),$ we have
	\begin{equation*}
	\mathcal{R}(\alpha_1^h,\beta_1^h)\gamma_1^h=\mathcal{D}_{\alpha_1^h}(\mathcal{D}_{\beta_1^h}\gamma_1^h)-\mathcal{D}_{\beta_1^h}(\mathcal{D}_{\alpha_1^h}\gamma_1^h)-\mathcal{D}_{[\alpha_1^h,\beta_1^h]_{\Pi}}\gamma_1^h.
	\end{equation*}
	Using Proposition 3.1 and Proposition 2.1, in this equation provides
	\begin{align*}
	\mathcal{R}(\alpha_1^h,\beta_1^h)\gamma_1^h&=\mathcal{D}_{\alpha_1^h}(\mathcal{D}_{\beta_1}^B\gamma_1)^h-\mathcal{D}_{\beta_1^h}(\mathcal{D}_{\alpha_1}^B\gamma_1)^h-\mathcal{D}_{[\alpha_1,\beta_1]_{\Pi_B}^h}\gamma_1^h\nonumber\\
	&=(\mathcal{D}_{\alpha_1}^B\mathcal{D}_{\beta_1}^B\gamma_1)^h-(\mathcal{D}_{\beta_1}^B\mathcal{D}_{\alpha_1}^B\gamma_1)^h-(\mathcal{D}_{[\alpha_1,\beta_1]_{\Pi_B}}^1\gamma_1)^h\nonumber\\
	&=\big[\mathcal{R}_B(\alpha_1,\beta_1)\gamma_1\big]^h.
	\end{align*}
	Putting $\alpha=\alpha_1^h$, $\beta=\beta_1^h$ and $\gamma=\gamma_2^v$ in $(3.2)$ and using Proposition 2.1, we have
	\begin{equation}
	\mathcal{R}(\alpha_1^h,\beta_1^h)\gamma_2^v=\mathcal{D}_{\alpha_1^h}(\mathcal{D}_{\beta_1^h}\gamma_2^v)-\mathcal{D}_{\beta_1^h}(\mathcal{D}_{\alpha_1^h}\gamma_2^v)-\mathcal{D}_{[\alpha_1,\beta_1]_{\Pi_{B}}^h}\gamma_2^v.
	\end{equation}
	Applying Proposition 3.1 in the first term $T_1$ of $(3.3),$ we have
	\begin{align}
	T_1&=\mathcal{D}_{\alpha_1^h}(\mathcal{D}_{\beta_1^h}\gamma_2^v)\nonumber\\
	&=\mathcal{D}_{\alpha_1^h}\Big(\frac{1}{f^h}g_B(J_1df,\beta_1)^h\gamma_2^v\Big)\nonumber\\
	&=\frac{1}{(f^h)^2}\big[\{-\mathcal{D}_{\alpha_1}^B(f)+g_B(J_1df,\alpha_1)\}g_B(J_1df,\beta_1)+f\mathcal{D}_{\alpha_1}^B\big(g_B(J_1df,\beta_1)\big)\big]^h\gamma_2^v.
	\end{align}
	Interchanging $\alpha_1$ and $\beta_1$ in this equation provides the second term $T_2$ of $(3.3)$ and so
	\begin{align*}
	T_1-T_2&=\mathcal{D}_{\alpha_1^h}(\mathcal{D}_{\beta_1^h}\gamma_2^v)-\mathcal{D}_{\beta_1^h}(\mathcal{D}_{\alpha_1^h}\gamma_2^v)\\
	&=\Big(\frac{\mathcal{D}_{\beta_1}^B(f)g_B(J_1df,\alpha_1)}{f^2}-\frac{\mathcal{D}_{\alpha_1}^B(f)g_B(J_1df,\beta_1)}{f^2}+\frac{\mathcal{D}_{\alpha_1}^B\big(g_B(J_1df,\beta_1)\big)}{f}\\
	&-\frac{\mathcal{D}_{\beta_1}^B\big(g_B(J_1df,\alpha_1)\big)}{f}\Big)^h\gamma_2^v.
	\end{align*}
	Applying Proposition 3.1 in the third term $T_3$ of $(3.3),$ we have
	\begin{align*}
	T_3&=\mathcal{D}_{[\alpha_1,\beta_1]_{\Pi_{B}}^h}\gamma_2^v\\
	&=\Big(\frac{g_B(J_1df,\mathcal{D}_{\alpha_1}^B\beta_1)}{f}-\frac{g_B(J_1df,\mathcal{D}_{\beta_1}^B\alpha_1)}{f}\Big)^h\gamma_2^v.
	\end{align*}
	Since,
	\begin{equation*}
	\mathcal{R}(\alpha_1^h,\beta_1^h)\gamma=\mathcal{R}(\alpha_1^h,\beta_1^h)\gamma_1^h+\mathcal{R}(\alpha_1^h,\beta_1^h)\gamma_2^v.
	\end{equation*}
	Thus after some calculations, the result follows.\\
	$(b).\:$ Putting $\alpha=\alpha_1^h$, $\beta=\beta_2^v$ and $\gamma=\gamma_1^h$ in $(3.2)$, we have
	\begin{equation*}
	\mathcal{R}(\alpha_1^h,\beta_2^v)\gamma_1^h=\mathcal{D}_{\alpha_1^h}(\mathcal{D}_{\beta_2^v}\gamma_1^h)-\mathcal{D}_{\beta_2^v}(\mathcal{D}_{\alpha_1^h}\gamma_1^h)-\mathcal{D}_{[\alpha_1^h,\beta_2^v]_{\Pi}}\gamma_1^h.
	\end{equation*}
	Using Proposition 3.1 and Proposition 2.1 in this equation, we have
	\begin{equation}
	\mathcal{R}(\alpha_1^h,\beta_2^v)\gamma_1^h=\mathcal{D}_{\alpha_1^h}(\mathcal{D}_{\gamma_1^h}\beta_2^v)-\mathcal{D}_{\beta_2^v}(\mathcal{D}_{\alpha_1}^B\gamma_1)^h.
	\end{equation}
	Replacing $\beta_1=\gamma_1$ and $\gamma_2=\beta_2$ in $(3.4)$, we have the first term $\mathcal{D}_{\alpha_1^h}(\mathcal{D}_{\gamma_1^h}\beta_2^v)$ of $(3.5)$. The second term of $(3.5)$ is given by
	\begin{align*}
	\mathcal{D}_{\beta_2^v}(\mathcal{D}_{\alpha_1}^B\gamma_1)^h=\frac{1}{2f^h}\big[2g_B(J_1df,\mathcal{D}_{\alpha_1}^B\gamma_1)^h\beta_2^v-\{f^3g_B(d\mu,\mathcal{D}_{\alpha_1}^B\gamma_1)\}^h(J_2\beta_2)^v\big].
	\end{align*}
	Thus after some calculations, the result follows.\\
	$(c).\:$ Putting $\alpha=\alpha_1^h$, $\beta=\beta_2^v$ and $\gamma=\gamma_2^v$ in $(3.2),$ we have
	\begin{equation*}
	\mathcal{R}(\alpha_1^h,\beta_2^v)\gamma_2^v=\mathcal{D}_{\alpha_1^h}(\mathcal{D}_{\beta_2^v}\gamma_2^v)-\mathcal{D}_{\beta_2^v}(\mathcal{D}_{\alpha_1^h}\gamma_2^v)-\mathcal{D}_{[\alpha_1^h,\beta_2^v]_{\Pi}}\gamma_2^v.
	\end{equation*}
	Using Proposition 2.1 in this equation, we have
	\begin{equation}
	\mathcal{R}(\alpha_1^h,\beta_2^v)\gamma_2^v=\mathcal{D}_{\alpha_1^h}(\mathcal{D}_{\beta_2^v}\gamma_2^v)-\mathcal{D}_{\beta_2^v}(\mathcal{D}_{\alpha_1^h}\gamma_2^v).
	\end{equation}
	Applying Proposition 3.1 in the first term of $(3.6),$ provides
	\begin{align*}
	\mathcal{D}_{\alpha_1^h}(\mathcal{D}_{\beta_2^v}\gamma_2^v)&=\mathcal{D}_{\alpha_1^h}\Big((\mathcal{D}_{\beta_2}^F\gamma_2)^v-\frac{1}{(f^h)^3}g_F(\beta_2,\gamma_2)^v(J_1df)^h\Big)\\
	&=\Big(\frac{g_B(J_1df,\alpha_1)}{f}\Big)^h(\mathcal{D}_{\beta_2}^F\gamma_2)^v\\
	&-\frac{1}{(f^h)^3}g_F(\beta_2,\gamma_2)^v(\mathcal{D}_{\alpha_1}^BJ_1df)^h-3\Big(\frac{g_B(J_1df,\alpha_1)}{f^4}\Big)^hg_F(\beta_2,\gamma_2)^v(J_1df)^h.
	\end{align*}
	The second term of $(3.6)$ is given by 
	\begin{align}
	\mathcal{D}_{\beta_2^v}(\mathcal{D}_{\alpha_1^h}\gamma_2^v)&=\mathcal{D}_{\beta_2^v}\Big(\frac{1}{f^h}g_B(J_1df,\alpha_1)^h\gamma_2^v\Big)\nonumber\\
	&=\Big(\frac{g_B(J_1df,\alpha_1)}{f}\Big)^h\Big((\mathcal{D}_{\beta_2}^F\gamma_2)^v-\frac{1}{(f^h)^3}g_F(\beta_2,\gamma_2)^v(J_1df)^h\Big).
	\end{align}
	Using the above terms in $(3.6)$ and after some calculations, the result follows.\\
	$(d).\:$ Putting $\alpha=\alpha_2^v$, $\beta=\beta_2^v$ and $\gamma=\gamma_1^h$ in $(3.2),$ we have
	\begin{equation*}
	\mathcal{R}(\alpha_2^v,\beta_2^v)\gamma_1^h=\mathcal{D}_{\alpha_2^v}(\mathcal{D}_{\beta_2^v}\gamma_1^h)-\mathcal{D}_{\beta_2^v}(\mathcal{D}_{\alpha_2^v}\gamma_1^h)-\mathcal{D}_{[\alpha_2^v,\beta_2^v]_{\Pi}}\gamma_1^h.
	\end{equation*}
	Using Proposition 3.1 in this equation, provides
	\begin{equation}
	\mathcal{R}(\alpha_2^v,\beta_2^v)\gamma_1^h=\mathcal{D}_{\alpha_2^v}(\mathcal{D}_{\gamma_1^h}\beta_2^v)-\mathcal{D}_{\beta_2^v}(\mathcal{D}_{\gamma_1^h}\alpha_2^v)-\mathcal{D}_{[\alpha_2^v,\beta_2^v]_{\Pi}}\gamma_1^h.
	\end{equation}
	The first and second term of $(3.8)$ are analogous of $(3.7)$.\\
	Using Proposition 3.1 and Proposition 2. in the third term of $(3.8),$ we have
	\begin{align*}
	\mathcal{D}_{[\alpha_2^v,\beta_2^v]_{\Pi}}\gamma_1^h&=\mathcal{D}_{[\alpha_2,\beta_2]^v_{\Pi_F}}\gamma_1^h\\
	&=\mathcal{D}_{\gamma_1^h}[\alpha_2,\beta_2]^v_{\Pi_F}\\
	&=\Big(\frac{g_B(J_1df,\gamma_1)}{f}\Big)^h[\alpha_2,\beta_2]^v_{\Pi_F}.
	\end{align*}
	After some calculations, the result follows.\\
	$(e).\:$ Putting $\alpha=\alpha_2^v$, $\beta=\beta_2^v$ and $\gamma=\gamma_2^v$ in $(3.2),$ we have
	\begin{equation}
	\mathcal{R}(\alpha_2^v,\beta_2^v)\gamma_2^v=\mathcal{D}_{\alpha_2^v}(\mathcal{D}_{\beta_2^v}\gamma_2^v)-\mathcal{D}_{\beta_2^v}(\mathcal{D}_{\alpha_2^v}\gamma_2^v)-\mathcal{D}_{[\alpha_2^v,\beta_2^v]_{\Pi}}\gamma_2^v.
	\end{equation}
	Applying Proposition 3.1 in the first term $P_1$ of $(3.9),$ provides
	\begin{align*}
	P_1&=\mathcal{D}_{\alpha_2^v}(\mathcal{D}_{\beta_2^v}\gamma_2^v)\\
	&=\mathcal{D}_{\alpha_2^v}\Big((\mathcal{D}_{\beta_2}^F\gamma_2)^v-\frac{1}{(f^h)^3}g_F(\beta_2,\gamma_2)^v(J_1df)^h\Big)\\
	&=(\mathcal{D}^F_{\alpha_2}\mathcal{D}^F_{\beta_2}\gamma_2)^v-\frac{1}{(f^h)^3}\big[g_F(\alpha_2,\mathcal{D}^F_{\beta_2}\gamma_2)+\mathcal{D}^F_{\alpha_2}\big(g_F(\beta_2,\gamma_2)\big)\big]^v(J_1df)^h\\
	&-\Big(\frac{||J_1df||_B^{2}}{f^4}\Big)^hg_F(\beta_2,\gamma_2)^v\alpha_2^v.
	\end{align*}
	Interchanging $\alpha_2$ and $\beta_2$ in this equation provides the second term $P_2=\mathcal{D}_{\beta_2^v}(\mathcal{D}_{\alpha_2^v}\gamma_2^v)$ of $(3.9)$. The third term $P_3$ of $(3.9)$ is given by
	\begin{align*}
	P_3&=\mathcal{D}_{[\alpha_2^v,\beta_2^v]_{\Pi}}\gamma_2^v\\
	&=\mathcal{D}_{[\alpha_2,\beta_2]^v_{\Pi_F}}\gamma_2^v\\
	&=(\mathcal{D}^F_{[\alpha_2,\beta_2]_{\Pi_F}}\gamma_2)^v-\frac{1}{(f^h)^3}g_F([\alpha_2,\beta_2]_{\Pi_F},\gamma_2)^v(J_1df)^h.
	\end{align*}
	Using the above terms in $(3.9)$, after some calculations the result follows.
\end{proof}
In the next two propositions, we will provide the expression for the Ricci curvature $Ric$ and scalar curvature $S$ with respect to the local $g^f$-orthonormal basis $$\{dx_1^{h},...,dx_{s_1}^{h},f^hdy_1^{v},...,f^hdy_{s_2}^{v}\}$$  on open subset $U_1\times U_2$ of $B\times F$, whenever $\{dx_1,...,dx_{s_1}\}$ and $\{dy_1,...,dy_{s_2}\}$ are local $g_B$-orthonormal basis  on an open subset $U_1$ of $B$ and local $g_F$-orthonormal basis  on an open subset $U_2$ of $F$ respectively.
\begin{proposition}Let $(M=B\times_f F, g^f)$ be a contravariant warped product space then
	for any $\alpha_1,\beta_1\in\Gamma(T^*B)$ and $\alpha_2,\beta_2\in\Gamma(T^*F)$, we have
	\begin{align*}
	 \textbf{(a).}\:Ric(\alpha_1^{h},\beta_1^{h})&=Ric_B(\alpha_1,\beta_1)^h-\frac{s_2}{(f^h)^2}\big[2A(\alpha_1,\beta_1)+fg_B(\mathcal{D}_{\alpha_1}^B(J_1df),\beta_1)\big]^h\\
	&\quad\quad where \: A(.,.)=g_B(J_1df,.)g_B(J_1df,.),\\
	\textbf{(b).}\:Ric(\alpha_1^{h},\beta_2^{v})&=0,\\
	 \textbf{(c).}\:Ric(\alpha_2^{v},\beta_2^{v})&=Ric_F(\alpha_2,\beta_2)^v-\Big(\frac{(s_2+1)||J_1df||_B^{2}}{f^4}+\frac{\Delta^{\mathcal{D}^B}(f)}{f^3}\Big)^hg_F(\alpha_2,\beta_2)^v,
	\end{align*}
	$\text{where}\quad\Delta^{\mathcal{D}^B}(f)=\displaystyle\sum_{i=1}^{s_1}g_B(\mathcal{D}_{dx_i}^B(J_1df),dx_i)$.
    \end{proposition}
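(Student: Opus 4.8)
The plan is to read off every component directly from the definition of the Ricci curvature in equation $(2.4)$, contracting the curvature tensor of Proposition 3.2 against the prescribed $g^f$-orthonormal coframe $\{dx_1^h,\dots,dx_{s_1}^h,f^hdy_1^v,\dots,f^hdy_{s_2}^v\}$. For each pair of arguments I would split the trace $Ric(\alpha,\beta)=\sum_i g^f(\mathcal{R}(\alpha,\eta_i)\eta_i,\beta)$ into a horizontal block running over the $dx_j^h$ and a vertical block running over the $f^hdy_k^v$. Since $\mathcal{R}(\alpha,\beta)\gamma$ is tensorial, in the vertical block I can pull the scalar $(f^h)^2$ out of $\mathcal{R}(\alpha,f^hdy_k^v)(f^hdy_k^v)=(f^h)^2\mathcal{R}(\alpha,dy_k^v)dy_k^v$, so that the relevant formula of Proposition 3.2 applies with $\beta_2=\gamma_2=dy_k$. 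Throughout I would use the completeness/orthonormality relations $\sum_j g_B(v,dx_j)g_B(dx_j,w)=g_B(v,w)$, $\sum_j g_B(v,dx_j)^2=\|v\|_B^2$ and $\sum_k g_F(dy_k,dy_k)^2=s_2$ (and the fibre analogues) to recognise the resulting sums.

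For part $(a)$ the horizontal block uses Proposition 3.2$(a)$ with $\gamma=dx_j^h$, so the $\gamma_2^v$-terms drop and only $[\mathcal{R}_B(\alpha_1,dx_j)dx_j]^h$ survives; summing over $j$ and pairing with $\beta_1^h$ reproduces $Ric_B(\alpha_1,\beta_1)^h$ by the definition of $Ric_B$. The vertical block uses Proposition 3.2$(c)$, whose value is horizontal, and after pairing with $\beta_1^h$ and summing over the $s_2$ fibre directions it yields exactly $-\tfrac{s_2}{(f^h)^2}[2A(\alpha_1,\beta_1)+fg_B(\mathcal{D}^B_{\alpha_1}(J_1df),\beta_1)]^h$. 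Part $(b)$ is the quickest: in the horizontal block $\mathcal{R}(\alpha_1^h,dx_j^h)dx_j^h$ is horizontal (Proposition 3.2$(a)$ with $\gamma_2=0$) and in the vertical block $(f^h)^2\mathcal{R}(\alpha_1^h,dy_k^v)dy_k^v$ is again horizontal (Proposition 3.2$(c)$), so every term pairs to zero against $\beta_2^v$ by the horizontal--vertical orthogonality of $g^f$, giving $Ric(\alpha_1^h,\beta_2^v)=0$.

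Part $(c)$ is where the real work and the main obstacle lie. In the vertical block I would apply Proposition 3.2$(e)$ (reading its first term as $[\mathcal{R}_F(\alpha_2,dy_k)dy_k]^v$): the curvature piece assembles into $Ric_F(\alpha_2,\beta_2)^v$, while the algebraic piece, after contracting with $\beta_2$ and using $\sum_k g_F(\alpha_2,dy_k)g_F(dy_k,\beta_2)=g_F(\alpha_2,\beta_2)$ together with $\sum_k g_F(dy_k,dy_k)^2=s_2$, contributes $(1-s_2)\tfrac{\|J_1df\|_B^2}{f^4}g_F(\alpha_2,\beta_2)^v$. The horizontal block is the delicate one: I use the antisymmetry $\mathcal{R}(\alpha_2^v,dx_j^h)dx_j^h=-\mathcal{R}(dx_j^h,\alpha_2^v)dx_j^h$ and Proposition 3.2$(b)$, which forces me to expand $\mathcal{D}^B_{dx_j}\!\big(\tfrac{J_1df}{f}\big)$ by the contravariant Leibniz rule. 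The hard part is the bookkeeping of this term: one must evaluate $\sharp_{\Pi_B}(dx_j)(\tfrac1f)=\tfrac{1}{f^2}g_B(J_1df,dx_j)$ (from $df(\sharp_{\Pi_B}(dx_j))=\Pi_B(dx_j,df)=-g_B(dx_j,J_1df)$) and then recognise $\sum_j g_B(\mathcal{D}^B_{dx_j}(J_1df),dx_j)=\Delta^{\mathcal{D}^B}(f)$ via $(2.8)$, so that the horizontal block produces $-\tfrac{1}{(f^h)^2}\big(\tfrac{2\|J_1df\|_B^2}{f^2}+\tfrac{\Delta^{\mathcal{D}^B}(f)}{f}\big)g_F(\alpha_2,\beta_2)^v$. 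Adding the two blocks, the $\|J_1df\|_B^2/f^4$ contributions, namely $2$ from the horizontal block and $s_2-1$ from the vertical block, add up to $s_2+1$, giving the stated $-\big(\tfrac{(s_2+1)\|J_1df\|_B^2}{f^4}+\tfrac{\Delta^{\mathcal{D}^B}(f)}{f^3}\big)^hg_F(\alpha_2,\beta_2)^v$. The one point demanding care is the pseudo-Riemannian sign convention: the trace in $(2.4)$, and likewise $(2.8)$, must be read with the signs $g(\eta_i,\eta_i)$ of the orthonormal coframe, so that the squared signs collapse to $1$; it is exactly this that turns the raw sums into $s_2$ and $\|J_1df\|_B^2$ and makes the completeness relations above valid.
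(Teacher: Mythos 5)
Your proposal is correct and takes essentially the same route as the paper: both compute $Ric$ as the trace of the curvature formulas of Proposition 3.2 against the adapted $g^f$-orthonormal coframe $\{dx_i^h,\,f^hdy_j^v\}$, pulling $(f^h)^2$ out of the vertical block by tensoriality. Your detailed bookkeeping for part (c) --- the antisymmetry reduction to Proposition 3.2(b), the Leibniz expansion of $\mathcal{D}^B_{dx_j}\big(\tfrac{J_1df}{f}\big)$ with $\sharp_{\Pi_B}(dx_j)(\tfrac1f)=\tfrac{1}{f^2}g_B(J_1df,dx_j)$, the reading of the first term of Proposition 3.2(e) as $[\mathcal{R}_F(\alpha_2,\gamma_2)\gamma_2]^v$, and the tally $2+(s_2-1)=s_2+1$ --- is accurate and supplies exactly the computation the paper compresses into ``Proof of (c) is similar to (a) and (b)''.
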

\begin{proof}
	As $\{dx_1^{h},...,dx_{s_1}^{h},f^hdy_1^{v},...,f^hdy_{s_2}^{v}\}$ is a local $g^f$-orthonormal basis then
	\begin{align*}
	Ric(\alpha_1^h,\beta_1^h)&=\sum_{i=1}^{s_1}g^f(\mathcal{R}(\alpha_1^h,dx_{i}^h)dx_{i}^h,\beta_1^h)+\sum_{j=1}^{s_2}g^f(\mathcal{R}(\alpha_1^h,f^hdy_{j}^v)f^hdy_{j}^v,\beta_1^h)\\
	&=\sum_{i=1}^{s_1}g^f(\mathcal{R}(\alpha_1^h,dx_{i}^h)dx_{i}^h,\beta_1^h)+(f^h)^2\sum_{j=1}^{s_2}g^f(\mathcal{R}(\alpha_1^h,dy_{j}^v)dy_{j}^v,\beta_1^h).\\
	\end{align*}
	Using Proposition 3.2 and system of equations (2.10) in the first term $Q_1$ of this equation, we get
	\begin{align*}
	Q_1&=\sum_{i=1}^{s_1}g^f(\mathcal{R}(\alpha_1^h,dx_{i}^h)dx_{i}^h,\beta_1^h)\\
	&=\sum_{i=1}^{k_1}g_B((\mathcal{R}_B(\alpha_1,dx_{i})dx_{i})^h,\beta_1^h)\\
	&=Ric_B(\alpha_1,\beta_1)^h.
	\end{align*}
	The second term $Q_2$ is given by
	\begin{align*}
	Q_2&=(f^h)^2\sum_{j=1}^{s_2}g^f(\mathcal{R}(\alpha_1^h,dy_{j}^v)dy_{j}^v,\beta_1^h)\\
	&=-2s_2\Big(\frac{g_B(J_1df,\alpha_1)g_B(J_1df,\beta_1)}{f^2}\Big)^h-s_2\Big(\frac{g_B(\mathcal{D}_{\alpha_1}^BJ_1df,\beta_1)}{f}\Big)^h.
	\end{align*}
	Thus the proof of (a) is follows. Now, prove of (b) is given by
	\begin{align*}
	Ric(\alpha_1^h,\beta_2^v)&=\sum_{i=1}^{s_1}g^f(\mathcal{R}(\alpha_1^h,dx_{i}^h)dx_{i}^h,\beta_2^v)+(f^h)^2\sum_{j=1}^{s_2}g^f(\mathcal{R}(\alpha_1^h,dy_{j}^v)dy_{j}^v,\beta_2^v).
	\end{align*}
	Using Proposition 3.2 and system of equations (2.10) in this equation we obtain
	\begin{align*}
	Ric(\alpha_1^h,\beta_2^v)&=(f^h)^2\sum_{j=1}^{s_2}g^f(\mathcal{R}(\alpha_1^h,dy_{j}^v)dy_{j}^v,\beta_2^v).
	\end{align*}
	This provides the result. Proof of (c) is similar to the (a) and (b).
\end{proof}
\begin{corollary}
	Let $(M=B\times_{f}F,g^f,\Pi)$ be a pseudo-Riemannian Poisson warped product space and $f$ is a Casimir function on $B$ then for any $\alpha_1,\beta_1\in\Gamma(T^*B)$ and $\alpha_2,\beta_2\in\Gamma(T^*F)$, we have
	\begin{align*}
	\textbf{(a).}\:Ric(\alpha_1^{h},\beta_1^{h})&=Ric_B(\alpha_1,\beta_1)^h,\\
	\textbf{(b).}\:Ric(\alpha_1^{h},\beta_2^{v})&=0,\\
	\textbf{(c).}\:Ric(\alpha_2^{v},\beta_2^{v})&=Ric_F(\alpha_2,\beta_2)^v.
	\end{align*}
\end{corollary}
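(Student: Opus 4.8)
The plan is to derive the corollary directly from Proposition 3.5 by showing that the Casimir hypothesis forces every warping correction term to vanish. The central observation is that ``$f$ is a Casimir function on $B$'' means $\sharp_{\Pi_B}(df)=0$, i.e.\ $df$ lies in the kernel of the anchor map. First I would translate this into a statement about the field endomorphism $J_1$ associated with $\Pi_B$. Using the defining relations $\Pi_B(df,\beta_1)=g_B(J_1df,\beta_1)$ and $\beta_1(\sharp_{\Pi_B}(df))=\Pi_B(df,\beta_1)$, the Casimir condition gives
\begin{equation*}
g_B(J_1df,\beta_1)=\beta_1(\sharp_{\Pi_B}(df))=0\qquad\text{for all }\beta_1\in\Gamma(T^*B),
\end{equation*}
and nondegeneracy of the cometric $g_B$ then yields $J_1df=0$ identically as a $1$-form field on $B$.

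With $J_1df=0$ in hand, the remaining steps are a routine substitution into Proposition 3.5. Since $J_1df$ vanishes as a section, its covariant derivative also vanishes, so $\mathcal{D}_{\alpha_1}^B(J_1df)=0$; likewise the bilinear form $A(\alpha_1,\beta_1)=g_B(J_1df,\alpha_1)g_B(J_1df,\beta_1)$ is identically zero, the norm $\|J_1df\|_B^2$ is zero, and consequently $\Delta^{\mathcal{D}^B}(f)=\sum_i g_B(\mathcal{D}_{dx_i}^B(J_1df),dx_i)=0$. Feeding these into part (a) of Proposition 3.5 removes the entire bracketed correction, leaving $Ric(\alpha_1^h,\beta_1^h)=Ric_B(\alpha_1,\beta_1)^h$; feeding them into part (c) removes both the $\|J_1df\|_B^2$ and the $\Delta^{\mathcal{D}^B}(f)$ contributions, leaving $Ric(\alpha_2^v,\beta_2^v)=Ric_F(\alpha_2,\beta_2)^v$. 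Part (b) needs no hypothesis at all, as it is already exactly Proposition 3.5(b).

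I do not anticipate a genuine obstacle here, since the statement is essentially a specialization of the preceding proposition. The only point requiring care is the bookkeeping between the two descriptions of the Casimir condition: one must be sure that ``Casimir on $B$'' is interpreted with respect to $\Pi_B$ (equivalently, that the relevant vanishing is $\sharp_{\Pi_B}(df)=0$ rather than a condition on the full product structure $\Pi=\Pi_B+\Pi_F$). Once that identification is made and $J_1df=0$ is established, the three claimed formulas drop out immediately, so the proof is short and reduces to recording which terms in Proposition 3.5 vanish.
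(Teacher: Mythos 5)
Your proof is correct and takes essentially the same route as the paper: the paper's own proof simply records that $f$ is Casimir on $B$ if and only if $J_1df=0$ and substitutes this into the Ricci curvature formulas (which are Proposition~3.3 in the paper, not Proposition~3.5 as you cite --- the content you describe is indeed the Ricci proposition, so this is only a numbering slip). Your explicit derivation of $J_1df=0$ from $\sharp_{\Pi_B}(df)=0$ via $g_B(J_1df,\beta_1)=\Pi_B(df,\beta_1)=\beta_1(\sharp_{\Pi_B}(df))$ and nondegeneracy of $g_B$ is a detail the paper asserts without proof, and your observation that $\mathcal{D}^B_{\alpha_1}(J_1df)$, $A$, $\|J_1df\|_B^2$ and $\Delta^{\mathcal{D}^B}(f)$ all vanish is exactly the substitution the paper intends.
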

\begin{proof}
As, $f$ is Casimir function $f$ on $B$ if and only if $J_1df=0$. Thus
 after using this in Proposition 3.3, provides the results.
\end{proof}
\subsection{Constant scalar curvature}
In the following proposition and corollary, we have to calculate the scalar curvature on contravariant warped product space and pseudo-Riemannian Poisson warped product respectively.
\begin{proposition}
	Let $(M=B\times_f F, g^f)$ be a contravariant warped product space then
	\begin{equation}
	S=S_B^h+(f^h)^2S_F^v-s_2\Big(\frac{(s_2+3)}{f^2}||J_1df||_B^{2}+\frac{2}{f}\Delta^{\mathcal{D}^B}(f)\Big)^h.
	\end{equation}
\end{proposition}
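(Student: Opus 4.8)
The plan is to obtain $S$ directly from its defining trace $(2.5)$, evaluated over the prescribed local $g^f$-orthonormal coframe $\{dx_1^h,\dots,dx_{s_1}^h,f^hdy_1^v,\dots,f^hdy_{s_2}^v\}$, and to substitute the Ricci components already computed in Proposition 3.3. Writing
$$S=\sum_{i=1}^{s_1}Ric(dx_i^h,dx_i^h)+\sum_{j=1}^{s_2}Ric(f^hdy_j^v,f^hdy_j^v),$$
I would first observe that no horizontal-vertical cross terms arise, since the coframe is already split into purely horizontal and purely vertical covectors and $Ric(dx_i^h,f^hdy_j^v)=0$ by Proposition 3.3(b). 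Using bilinearity of $Ric$, each vertical summand carries an overall factor $(f^h)^2$, so the second sum equals $(f^h)^2\sum_j Ric(dy_j^v,dy_j^v)$.

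For the horizontal sum I would substitute Proposition 3.3(a). The leading term $\sum_i Ric_B(dx_i,dx_i)^h$ is exactly $S_B^h$, by applying the trace $(2.5)$ on the base $B$. In the remaining term, $g_B$-orthonormality of $\{dx_i\}$ gives $\sum_i A(dx_i,dx_i)=\sum_i g_B(J_1df,dx_i)^2=\|J_1df\|_B^2$, while $\sum_i g_B(\mathcal{D}_{dx_i}^B(J_1df),dx_i)$ is precisely $\Delta^{\mathcal{D}^B}(f)$ by the definition recorded in Proposition 3.3. Hence the horizontal sum equals $S_B^h-\frac{2s_2}{f^2}\|J_1df\|_B^2-\frac{s_2}{f}\Delta^{\mathcal{D}^B}(f)$ (as horizontal lifts).

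For the vertical sum I would substitute Proposition 3.3(c). The term $\sum_j Ric_F(dy_j,dy_j)^v$ yields $S_F^v$, which after the factor $(f^h)^2$ becomes $(f^h)^2S_F^v$; the complementary term carries $\sum_j g_F(dy_j,dy_j)=s_2$ by $g_F$-orthonormality, and the factor $(f^h)^2$ cancels against the $f^4$ and $f^3$ in the denominators, producing $-s_2\bigl(\frac{(s_2+1)}{f^2}\|J_1df\|_B^2+\frac{1}{f}\Delta^{\mathcal{D}^B}(f)\bigr)$. Adding the two sums and collecting like terms, the coefficient of $\frac{s_2}{f^2}\|J_1df\|_B^2$ is $2+(s_2+1)=s_2+3$ and that of $\frac{s_2}{f}\Delta^{\mathcal{D}^B}(f)$ is $1+1=2$, which reproduces $(3.10)$ exactly.

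The argument is a pure computation, so the only real obstacle is keeping the powers of $f^h$ and the orthonormality normalizations straight; the decisive point is that the unit vertical covectors are $f^hdy_j^v$ rather than the bare $dy_j^v$, and it is precisely the resulting $(f^h)^2$ rescaling of the second sum that merges the two contributions of $\|J_1df\|_B^2$ into the coefficient $s_2+3$ and the two contributions of $\Delta^{\mathcal{D}^B}(f)$ into the coefficient $2$.
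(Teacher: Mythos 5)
Your proposal is correct and follows essentially the same route as the paper's proof: trace $Ric$ over the $g^f$-orthonormal coframe $\{dx_i^h, f^hdy_j^v\}$, pull the $(f^h)^2$ out of the vertical sum, substitute Proposition 3.3, and use $g_B$- and $g_F$-orthonormality to reduce $\sum_i A(dx_i,dx_i)$ to $\|J_1df\|_B^2$ and $\sum_j g_F(dy_j,dy_j)$ to $s_2$ before collecting the coefficients $s_2+3$ and $2$. As a minor point, your intermediate horizontal sum carries the correct minus sign in front of $\frac{2s_2}{f^2}\|J_1df\|_B^2+\frac{s_2}{f}\Delta^{\mathcal{D}^B}(f)$, whereas the paper's displayed expression for $P_1$ has a sign typo ($+$ instead of $-$) that is silently corrected in the final formula $(3.10)$.
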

	\begin{proof}
	As $\{dx_1^{h},...,dx_{s_1}^{h},f^hdy_1^{v},...,f^hdy_{s_2}^{v}\}$ is a local $g^f$-orthonormal basis then
	\begin{align*}
	S&=\sum_{i=1}^{s_1}Ric(dx_{i}^h,dx_{i}^h)+\sum_{j=1}^{s_2}Ric(f^hdy_{j}^v,f^hdy_{j}^v)\\
	&=\sum_{i=1}^{s_1}Ric(dx_{i}^h,dx_{i}^h)+(f^h)^2\sum_{j=1}^{s_2}Ric(dy_{j}^v,dy_{j}^v).
	\end{align*}
Using Proposition 3.3 in the first term $P_1$ of this equation provides
\begin{align*}
    P_1&=\sum_{i=1}^{s_1}Ric(dx_{i}^h,dx_{i}^h)\\
    &=\sum_{i=1}^{s_1}Ric_B(dx_i,dx_i)^h-\frac{s_2}{(f^h)^2}\sum_{i=1}^{s_1}\big[2A(dx_i,dx_i)+fg_B(\mathcal{D}_{dx_i}^B(J_1df),dx_i)\big]^h\\
    &=S_B+\frac{s_2}{(f^h)^2}\big[2||J_1df||_B^{2}+f\Delta^{\mathcal{D}^B}(f)\big]^h.
\end{align*}
The second term $P_2$ is given by
\begin{align*}
P_2&=(f^h)^2\sum_{j=1}^{s_2}Ric(dy_{j}^v,dy_{j}^v)\\
&=(f^h)^2\sum_{j=1}^{s_2}Ric_B(dy_j,dy_j)^h-\Big(\frac{(s_2+1)||J_1df||_B^{2}}{f^2}+\frac{\Delta^{\mathcal{D}^B}(f)}{f}\Big)^h\sum_{i=1}^{s_2}g_F(dy_j,dy_j)^v\\
&=(f^h)^2S_F-s_2\Big(\frac{(s_2+1)||J_1df||_B^{2}}{f^2}+\frac{\Delta^{\mathcal{D}^B}(f)}{f}\Big)^h.
\end{align*}
The result follows.
\end{proof}
\begin{corollary}
	Let $(M=B\times_{f}F,g^f,\Pi)$ be a pseudo-Riemannian Poisson warped product space and $f$ is a Casimir function on $B$, then
	\begin{equation}
	S=S_B^h+(f^h)^2S_F^v.
	\end{equation}
	\begin{proof}
The equation $(3.11)$ follows after using the hypothesis of Casimir function $f$ in Proposition 3.5.	
\end{proof}
\end{corollary}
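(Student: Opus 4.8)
The plan is to read the corollary off directly from Proposition 3.5, whose formula has already been arranged so that the two warping-correction terms are isolated. Proposition 3.5 gives
\[
S=S_B^h+(f^h)^2S_F^v-s_2\Big(\frac{(s_2+3)}{f^2}||J_1df||_B^{2}+\frac{2}{f}\Delta^{\mathcal{D}^B}(f)\Big)^h,
\]
so the entire task reduces to showing that the parenthetical correction term vanishes identically whenever $f$ is a Casimir function on the base $(B,\Pi_B)$.

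First I would invoke the characterization already used in the proof of Corollary 3.4: a smooth function $f$ on $(B,\Pi_B)$ is Casimir if and only if $\sharp_{\Pi_B}(df)=0$, equivalently $J_1df=0$ as a one-form on $B$. Substituting $J_1df=0$ into the displayed formula instantly annihilates the term $||J_1df||_B^{2}$.

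The only step meriting a sentence of care is the Laplacian term. By the definition recorded just after Proposition 3.3, $\Delta^{\mathcal{D}^B}(f)=\sum_{i=1}^{s_1}g_B(\mathcal{D}^B_{dx_i}(J_1df),dx_i)$, so it is built entirely from the contravariant derivatives of the one-form $J_1df$. Since $J_1df$ is the zero one-form, every $\mathcal{D}^B_{dx_i}(J_1df)$ vanishes and hence $\Delta^{\mathcal{D}^B}(f)=0$ as well. With both correction pieces gone, the formula collapses to $S=S_B^h+(f^h)^2S_F^v$, which is $(3.11)$. There is no genuine analytic obstacle here, since both parts of the obstruction are governed by the single object $J_1df$; the one thing not to overlook is that the Laplacian term is not independent data but is itself a functional of $J_1df$, so it disappears for exactly the same reason the norm term does.
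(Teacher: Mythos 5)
Your proposal is correct and follows exactly the route the paper takes: substitute the Casimir condition $J_1df=0$ into the scalar curvature formula of Proposition 3.5, noting (as the paper leaves implicit) that both $\|J_1df\|_B^2$ and $\Delta^{\mathcal{D}^B}(f)=\sum_{i=1}^{s_1}g_B(\mathcal{D}^B_{dx_i}(J_1df),dx_i)$ vanish because each is a functional of the zero one-form $J_1df$. Your write-up merely makes explicit the detail about the Laplacian term that the paper's one-line proof leaves to the reader.
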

Next, we will try to provide the answer to the following question, under the assumption that $S_F(y)=\mu$(constant) on $F$, can we find a warping function $f>0$ on $B$ such that the contravariant warped metric $g^f$ has constant scalar curvature $S(x,y)=\mu_1$ on pseudo-Riemannian Poisson warped space $(M=B\times_{f}F,g^f,\Pi)$?
\begin{theorem}
Let $(M=B\times_{f}F,g^f,\Pi)$ be a pseudo-Riemannian Poisson warped product space with $dim(F)=s_2>1$ and $f$ is Casimir function on $B$. If the fiber space has constant scalar curvature $\mu\neq0$. Then $g^f$ admits the following warping function $f$ for which M has a constant scalar curvature $\mu_1$,\\
$\textbf{(a).}$ $\mu_1>S_B,\quad f=\sqrt{\frac{\mu_1-S_B}{\mu}},\quad  \mu>0$,\\
$\textbf{(b).}$ $\mu_1<S_B,\quad f=\sqrt{\frac{\mu_1-S_B}{\mu}},\quad  \mu<0$,\\
$\textbf{(c).}$ If $\mu_1=S_B,\:$ does not exist warping function.
\end{theorem}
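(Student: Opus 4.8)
The plan is to reduce the problem entirely to the scalar curvature formula of Corollary 3.6 and then solve the resulting algebraic constraint for $f$. Since $f$ is a Casimir function on $B$, Corollary 3.6 applies and gives $S = S_B^h + (f^h)^2 S_F^v$. First I would substitute the hypothesis that the fiber has constant scalar curvature, $S_F = \mu$, and impose that $M$ carries constant scalar curvature $S = \mu_1$. This converts the pointwise identity into the single scalar equation $\mu_1 = S_B + \mu f^2$ on $B$.

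Next I would invert this equation. As $\mu \neq 0$, I isolate $f^2 = \frac{\mu_1 - S_B}{\mu}$, and since a warping function must satisfy $f > 0$, the existence of a solution is governed solely by the sign of $\frac{\mu_1 - S_B}{\mu}$. This separates into the three stated cases. If $\mu_1 > S_B$ the numerator is positive, so a positive square root exists precisely when $\mu > 0$, giving $f = \sqrt{\frac{\mu_1 - S_B}{\mu}}$ (case (a)); if $\mu_1 < S_B$ the numerator is negative, forcing $\mu < 0$ for positivity, and again $f = \sqrt{\frac{\mu_1 - S_B}{\mu}}$ (case (b)). If instead $\mu_1 = S_B$, then $f^2 = 0$, so $f \equiv 0$, which violates $f > 0$; hence no admissible warping function exists (case (c)). I would also remark that the standing hypothesis $s_2 > 1$ is exactly what keeps the assumption $\mu \neq 0$ consistent, since a one-dimensional fiber is flat and would force $\mu = 0$.

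I do not anticipate a genuine obstacle, because the substantive geometric content is already packaged in Corollary 3.6; what remains is an algebraic inversion together with a sign discussion. The one point requiring care is that the sign conditions are really pointwise in $x \in B$: if $S_B$ is non-constant then $f = \sqrt{\frac{\mu_1 - S_B}{\mu}}$ is a genuine function rather than a constant, and one must verify that $\frac{\mu_1 - S_B}{\mu}$ stays strictly positive throughout $B$ so that $f$ is globally defined, positive, and smooth, the smoothness being inherited from that of $S_B$.
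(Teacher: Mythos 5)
Your proposal is correct and follows essentially the same route as the paper: the paper starts from the scalar curvature formula of Proposition 3.5, sets $S=\mu_1$, and then applies the Casimir hypothesis to kill the $\|J_1df\|_B^2$ and $\Delta^{\mathcal{D}^B}(f)$ terms, arriving at the same algebraic equation $S_B-\mu_1+f^2\mu=0$ that you obtain directly from Corollary 3.6 (which is exactly Proposition 3.5 with the Casimir hypothesis already applied). Your closing observations about the pointwise positivity of $\frac{\mu_1-S_B}{\mu}$ when $S_B$ is non-constant and about $s_2>1$ being needed for $\mu\neq0$ are sound refinements that the paper leaves implicit.
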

\begin{proof}
If $S(x,y)=\mu_1$, equation $(3.10)$ is pullback by $\pi$ of the following equation
$$\mu_1=S_B+f^2\mu-s_2\Big(\frac{(s_2+3)}{f^2}||J_1df||_B^{2}+\frac{2}{f}\Delta^{\mathcal{D}^B}(f)\Big),$$
or equivalently,
\begin{equation}
\frac{1}{2s_2}(S_B-\mu_1)f+\frac{f^3}{2s_2}\mu-\frac{(s_2+3)}{2f}||J_1df||_B^{2}-\Delta^{\mathcal{D}^B}(f)=0.
\end{equation}
After using the hypothesis of Casimir function $f$ in equation $(3.12),$ we have
$$S_B-\mu_1+f^2\mu=0.$$
This provides proof of (a), (b) and (c).
\end{proof}
\section{\textbf{Characterization of warping function on Einstein Poisson warped product space}}
Let $\mathcal{D}$ is the contravariant Levi-Civita connection associated with the pair $(\Pi,g^f)$ (where $\Pi=\Pi_B+\Pi_F$) on contravariant warped product space $(M=B\times_f F,g^f)$. In the following two theorems, we will discuss Einstein criteria on contravariant warped product space  $(M=B\times_f F,g^f)$ and Riemannian Poisson warped product space $(M=B\times_{f}F,g^f,\Pi)$.
\begin{theorem} Let $\mathcal{D}$ is a contravariant Levi-Civita connection associated with the pair $(\Pi,g^f)$ on contravariant warped product space $(M=B\times_f F,g^f)$. Then $M$ is Einstein with $Ric=\lambda g^f$ if and only if the following conditions are satisfied,\\
	$\textbf{(a).}$ $Ric_B=\lambda g_B+\frac{s_2}{f^2}\big[2A-fH_{\Pi_B}^f\big],$ where $A(.,.)=g_B(J_1df,.)g_B(J_1df,.)$,\\
	$\textbf{(b).}$ $(F,g_F)$ is Einstein with $Ric_F=\tilde{\lambda} g_F$,\\
	$\textbf{(c).}$ $\tilde{\lambda}=\frac{1}{f^4}\big[\lambda f^2+(s_2+1)||J_1df||_B^{2}+f\Delta^{\mathcal{D}^B}(f)\big]$.
\end{theorem}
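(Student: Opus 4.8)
The plan is to test the Einstein condition $Ric=\lambda g^f$ against the three Ricci formulas of Proposition 3.3, evaluating the right-hand side with the cometric relations $(2.10)$. Writing an arbitrary cotangent vector as $\alpha=\alpha_1^h+\alpha_2^v$ and using that both $Ric$ and $g^f$ are symmetric and vanish on mixed horizontal--vertical pairs (Proposition 3.3(b) together with $(2.10)$), it is enough to impose $Ric=\lambda g^f$ on the three blocks $(\alpha_1^h,\beta_1^h)$, $(\alpha_1^h,\beta_2^v)$ and $(\alpha_2^v,\beta_2^v)$ separately.

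For the horizontal--horizontal block, Proposition 3.3(a) and $g^f(\alpha_1^h,\beta_1^h)=g_B(\alpha_1,\beta_1)^h$ turn $Ric(\alpha_1^h,\beta_1^h)=\lambda g^f(\alpha_1^h,\beta_1^h)$ into
$$Ric_B(\alpha_1,\beta_1)-\frac{s_2}{f^2}\big[2A(\alpha_1,\beta_1)+fg_B(\mathcal{D}_{\alpha_1}^B(J_1df),\beta_1)\big]=\lambda g_B(\alpha_1,\beta_1).$$
Solving for $Ric_B$ and identifying the contravariant Hessian via $H_{\Pi_B}^f(\alpha_1,\beta_1)=-g_B(\mathcal{D}_{\alpha_1}^B(J_1df),\beta_1)$, consistent with $(2.8)$, produces condition (a) exactly. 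The mixed block needs nothing: Proposition 3.3(b) gives $Ric(\alpha_1^h,\beta_2^v)=0=\lambda g^f(\alpha_1^h,\beta_2^v)$.

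The essential step is the vertical--vertical block. By Proposition 3.3(c) and $g^f(\alpha_2^v,\beta_2^v)=\frac{1}{f^2}g_F(\alpha_2,\beta_2)^v$, the equation $Ric(\alpha_2^v,\beta_2^v)=\lambda g^f(\alpha_2^v,\beta_2^v)$ rearranges to
$$Ric_F(\alpha_2,\beta_2)=\frac{1}{f^4}\big[\lambda f^2+(s_2+1)\|J_1df\|_B^2+f\Delta^{\mathcal{D}^B}(f)\big]\,g_F(\alpha_2,\beta_2).$$
Here $Ric_F(\alpha_2,\beta_2)$ and $g_F(\alpha_2,\beta_2)$ are pulled back from $F$ while the bracketed coefficient is pulled back from $B$. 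The part I expect to be the main obstacle is the separation argument: choosing $\alpha_2,\beta_2$ with $g_F(\alpha_2,\beta_2)\not\equiv 0$ and letting the base point vary forces the $B$-coefficient to be a constant, which we call $\tilde\lambda$; this simultaneously gives $Ric_F=\tilde\lambda g_F$ (condition (b)) and the value of $\tilde\lambda$ (condition (c)). For the converse I would substitute (a), (b), (c) back into the three formulas of Proposition 3.3 and check that each block collapses to $\lambda g^f$, which is just the preceding algebra reversed, condition (c) being exactly what cancels the $B$-dependent terms in the vertical block.
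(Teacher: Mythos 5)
Your proof is correct and follows essentially the same route as the paper's: decompose the Einstein condition $Ric=\lambda g^f$ into the horizontal--horizontal, mixed, and vertical--vertical blocks, apply Proposition 3.3 together with the cometric relations $(2.10)$, and use the pullback structure to separate the $B$-dependent coefficient from the $F$-dependent tensors. The only difference is one of detail: you make explicit the constancy argument for $\tilde\lambda$ (and the identification $H_{\Pi_B}^f(\alpha_1,\beta_1)=-g_B(\mathcal{D}_{\alpha_1}^B(J_1df),\beta_1)$ consistent with $(2.8)$), which the paper compresses into the phrase ``properties of pullback maps $\pi$ and $\sigma$.''
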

\begin{proof}
	 $Ric(\alpha,\beta)=\lambda g^f(\alpha,\beta)$ if and only if $Ric(\alpha_1^h,\beta_1^h)=\lambda g_B(\alpha_1,\beta_1)^h$,\\ $Ric(\alpha_2^v,\beta_2^v)=\frac{\lambda}{(f^h)^2}g_B(\alpha_2,\beta_2)^v$ and $Ric(\alpha_1^h,\beta_2^v)=0,$\\  for any $\alpha=\alpha_1^h+\alpha_2^v$ and $\beta=\beta_1^h+\beta_2^v$, where $\alpha_1,\beta_1\in\Gamma(T^*B)$, $\alpha_2,\beta_2\in\Gamma(T^*F)$. After using Proposition 3.3 and properties of pullback maps $\pi$ and $\sigma$ in the above equalities provide (a), (b) and (c).
\end{proof}
\begin{remark}
	With the notations of Theorem 4.1, the triplet $(M=B\times_{f}F,g^f,\Pi)$ is called contravariant Einstein warped product space.
\end{remark}
Under the assumption of Theorem 2.3, we will provide necessary and sufficient conditions for Riemannian Poisson warped product space $(M=B\times_{f}F,g^f,\Pi)$ to be an Einstein.
\begin{theorem}
	Let $(M=B\times_{f}F,g^f,\Pi)$ be a Riemannian Poisson warped product space and $f$ is a Casimir function on $B$. Then $M$ is Einstein with $Ric=\lambda g$ if and only if $B$ and $F$ are Einstein with $Ric_B=\lambda g_B$ and $Ric_F=\mu g_F$ respectively
	where $\tilde{\lambda}=\frac{\lambda}{f^2}$.
\end{theorem}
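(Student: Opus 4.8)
The plan is to obtain this statement as a direct specialization of Theorem 4.1, the only extra ingredient being the characterization of a Casimir function already used in the proof of Corollary 3.4: on $B$ the function $f$ is Casimir if and only if $J_1 df = 0$. Once this vanishing is available, each of the curvature-correction terms in conditions (a)--(c) of Theorem 4.1 collapses, and those three conditions reduce precisely to the clean statement claimed here.

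First I would record the effect of $J_1 df = 0$ on the auxiliary quantities occurring in Theorem 4.1. Since $A(\cdot,\cdot) = g_B(J_1 df,\cdot)\, g_B(J_1 df,\cdot)$, we immediately get $A \equiv 0$, and likewise $\|J_1 df\|_B^2 = g_B(J_1 df, J_1 df) = 0$. For the Hessian and Laplacian I would use that both are built from $\mathcal{D}^B(J_1 df)$: by the defining relation $\Delta^{\mathcal{D}^B}(f) = \sum_i g_B(\mathcal{D}_{dx_i}^B (J_1 df), dx_i)$ recorded in Proposition 3.3, and the analogous expression for $H_{\Pi_B}^f$, the identity $J_1 df = 0$ forces $\mathcal{D}_\alpha^B (J_1 df) = \mathcal{D}_\alpha^B(0) = 0$ for every $\alpha$, whence $H_{\Pi_B}^f = 0$ and $\Delta^{\mathcal{D}^B}(f) = 0$.

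Substituting these vanishings into Theorem 4.1 then delivers both implications at once. In the forward direction, condition (a) loses its entire bracket $2A - f H_{\Pi_B}^f$ and becomes $Ric_B = \lambda g_B$; condition (b) is left untouched, asserting that $(F,g_F)$ is Einstein with $Ric_F = \tilde\lambda g_F$; and condition (c) reduces to $\tilde\lambda = (\lambda f^2)/f^4 = \lambda/f^2$. For the converse I would read the same three identities backwards: assuming $B$ and $F$ Einstein with the stated constants and $\tilde\lambda = \lambda/f^2$, conditions (a)--(c) of Theorem 4.1 hold verbatim, so that theorem returns $Ric = \lambda g^f$ on $M$.

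The one point I expect to require genuine care, rather than routine substitution, is the internal consistency of condition (c): the fiber Einstein factor $\tilde\lambda$ must be a constant on $F$, whereas $\lambda/f^2$ is a priori a function of the base point through $f$. I would therefore emphasize that the relation $\tilde\lambda = \lambda/f^2$ is meaningful only when $\lambda/f^2$ is constant along $B$, a compatibility that the Casimir hypothesis together with the demand that $F$ carry a genuine Einstein metric enforces; the Ricci-flat case $\lambda = 0$ is the clean consistent subcase and serves as a useful sanity check.
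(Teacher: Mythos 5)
Your proposal is correct and follows exactly the paper's own route: the paper's entire proof is the one-line observation that $f$ being a Casimir function is equivalent to $J_1df=0$, substituted into Theorem 4.1. Your explicit verification that $A$, $\|J_1df\|_B^{2}$, $H_{\Pi_B}^f$ and $\Delta^{\mathcal{D}^B}(f)$ all vanish, and your remark on the constancy of $\lambda/f^2$ along the base, merely spell out details the paper leaves implicit (the latter point is a genuine subtlety the paper passes over in silence, so flagging it is a welcome addition rather than a deviation).
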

\begin{proof}
	Note that $f$ is Casimir function if and only if $J_1df=0$, using this hypothesis in Theorem $(4.1)$ result is proved.
\end{proof}
\begin{remark}
	With the notations of Theorem 4.3, $(M=B\times_{f}F,g^f,\Pi)$ is called Einstein Poisson warped product space.
\end{remark}
Now, we will discuss an example that verifies the above corollary.
\begin{example}
Let $M=I\times_{f}F$ be a generalized Robertson-Walker space-times with metric $\tilde{g}^f=\tilde{g_{I}}+f(t)^2\tilde{g_{F}}$, where $I$ be an interval and $\tilde{g_{I}}=-dt^2$. We denote the cometric of the metric $\tilde{g_{I}}$ by $g_I$ and
$g_I(dt,dt)=-1$. Thus $g^f=g_I^h+\frac{1}{f(t)^2}g_F^v$ is a cometric of the warped metric $\tilde{g}^f$. Let $\Pi$ and $f$ are the bivector and Casimir function on $M$ and $I$ respectively. Then $(M,g^f,\Pi)$ is a pseudo-Riemannian Poisson warped product space.\par Now let $M$ is an Einstein manifolds with scalar $\lambda$, provides
\begin{align}
Ric(dt^h,dt^h)&=\lambda g_I(dt^h,dt^h)=\lambda g_I(dt,dt)^h,\\
Ric(\alpha_2^{v},\beta_2^{v})&=\frac{\lambda}{f(t)^2}g_F(\alpha_2,\beta_2)^v.
\end{align}
Next from Corollary 3.4, we have
\begin{align}
Ric(dt^h,dt^h)&=Ric_I(dt,dt)^h,\\
Ric(\alpha_2^{v},\beta_2^{v})&=Ric_F(\alpha_2,\beta_2)^v.
\end{align}
From equation (4.1) and (4.3), we obtain
\begin{equation}
Ric_I(dt,dt)=\lambda g_I(dt,dt).
\end{equation}
Similarly, from equation (4.2) and (4.4), we have
\begin{equation}
Ric_F(\alpha_2,\beta_2)=\frac{\lambda}{f(t)^2}g_F(\alpha_2,\beta_2).
\end{equation}
 After taking trace of (4.5) implies that $\lambda=0$.
Thus from equations (4.5), (4.6) and by the hypothesis of $\lambda$, we conclude that $M$ is Ricci-flat if and only if $I$ and $F$ are.
\end{example}
\subsection{Einstein Poisson warped product space with 1-dim base (dim B=1)}
In the following two theorems, we will provide the information of warping function for Einstein Poisson warped product space $(M=B\times_{f}F,g^f,\Pi)$ when $dim(B)=s_1=1$ and $f$ is a Casimir function on $B$.
\begin{theorem}
	Let $(M=B\times_{f}F,g^f,\Pi)$ be an Einstein Poisson warped product space with $dim(B)=s_1=1$ and $s_2>1$, $f$ is Casimir function on $B$ and $\hat{\lambda}>0$. Then\\
	$\textbf{(a).}$ If $\lambda>0$, $f=\sqrt{\frac{\lambda}{\hat{\lambda}}}$.\\
	$\textbf{(b).}$ If $\lambda<0$, does not exist warping function.
\end{theorem}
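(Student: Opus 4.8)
The plan is to distil the Einstein hypothesis on the total space into a single scalar relation linking $f$ to the two Einstein constants, and then decide existence purely from signs. Since $(M=B\times_{f}F,g^f,\Pi)$ is an Einstein Poisson warped product with $f$ Casimir on $B$, Theorem 4.3 applies directly: it forces the fiber $(F,g_F)$ to be Einstein with $Ric_F=\tilde\lambda\,g_F$, where $\tilde\lambda=\lambda/f^2$. I would then let $\hat\lambda$ denote the Einstein constant of the fiber, i.e.\ $Ric_F=\hat\lambda\,g_F$; comparing the two expressions for $Ric_F$ identifies $\hat\lambda=\tilde\lambda$ and yields the key identity
\begin{equation*}
\hat\lambda=\frac{\lambda}{f^2}.
\end{equation*}

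The next step is to solve this identity for $f$. Because $\lambda$ and $\hat\lambda$ are constants, the relation $f^2=\lambda/\hat\lambda$ immediately shows that any admissible warping function must be the constant $f=\sqrt{\lambda/\hat\lambda}$ (the positive root, since $f>0$ by hypothesis). Here the assumption $\dim B=s_1=1$ enters only to streamline the base side: in dimension one $Ric_B$ vanishes identically and $S_B=0$, so by Corollary 3.4(a) the base component of $Ric$ imposes no differential constraint on $f$, and once $J_1df=0$ the fiber relation above is the entire content of the Einstein equations.

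Finally I would carry out the sign analysis against the standing assumption $\hat\lambda>0$. If $\lambda>0$ then $\lambda/\hat\lambda>0$, so $f=\sqrt{\lambda/\hat\lambda}$ is a genuine positive constant and part (a) follows. If instead $\lambda<0$ then $\lambda/\hat\lambda<0$, the equation $f^2=\lambda/\hat\lambda$ has no real solution, and hence no positive warping function can exist, giving part (b). I expect the only delicate point to be the bookkeeping of which constant is which—confirming that the fiber Einstein constant delivered by Theorem 4.3 is precisely the $\hat\lambda$ of the statement and that the factor $1/f^2$ appears with the correct orientation—after which the existence/nonexistence dichotomy is an immediate consequence of the sign of $\lambda/\hat\lambda$.
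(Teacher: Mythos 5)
Your argument is correct and lands on exactly the identity the paper uses, but it reaches it by a shorter route. The paper's own proof does not invoke Theorem 4.3 at this point: it merges conditions (a) and (b) of Theorem 4.1 into the single scalar equation (4.7), specializes to the Casimir case (where $J_1df=0$ kills the terms $A$, $\|J_1df\|_B^{2}$ and $\Delta^{\mathcal{D}^B}(f)$) to get $Ric_B=\tfrac{1}{2}\big[S_B+s_2f^2\hat{\lambda}-(s_1+s_2-2)\lambda\big]g_B$, then substitutes $S_B=\lambda$ (the trace of the one-dimensional Einstein condition) and traces again to arrive at $\hat{\lambda}f^2=\lambda$, its equation (4.9). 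You instead quote Theorem 4.3 directly, identify the fiber Einstein constant $\tilde{\lambda}=\lambda/f^2$ with the $\hat{\lambda}$ of the statement, and obtain the same relation at once; since Theorem 4.3 is itself the Casimir reduction of Theorem 4.1, the two derivations are logically equivalent, and your sign dichotomy ($\hat{\lambda}>0$, so $f=\sqrt{\lambda/\hat{\lambda}}$ when $\lambda>0$ and no real root when $\lambda<0$) coincides with the paper's conclusion. One caveat concerns your side remark on the base: you assert that in dimension one $Ric_B$ vanishes and $S_B=0$, whereas the paper uses $S_B=\lambda$. Your observation is in fact the sharper one in this contravariant setting (a bivector on a one-dimensional manifold vanishes, so $\mathcal{D}^B$ is flat and $Ric_B=0$), but taken literally it would force $\lambda=0$ through the horizontal Einstein equation $Ric_B=\lambda g_B$, rendering the cases $\lambda>0$ and $\lambda<0$ vacuous --- a tension already latent in the paper's own Theorem 4.7, which your fiber-only derivation quietly sidesteps rather than resolves. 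This does not affect the formal correctness of your reduction to $f^2=\lambda/\hat{\lambda}$, which is precisely the paper's (4.9).
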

\begin{proof}
We may replace the two conditions $(a)$ and $(b)$ of Theorem 4.1, by the unique equation
\begin{align}
Ric_B-\frac{s_2}{f^2}\big[2A-fH_{\Pi_B}^f\big]&=\frac{1}{2}\Big[S_B-\frac{s_2(s_2+3)}{f^2}||J_1df||_B^{2}
-\frac{2s_2}{f}\Delta^{\mathcal{D}^B}(f)\nonumber\\
&+s_2f^2\hat{\lambda}-(s_1+s_2-2)\lambda\Big]g_B.
\end{align}
If $f$ is Casimir function on $B$ then equation (4.7), provides
	\begin{equation}
	Ric_B=\frac{1}{2}\big[S_B+s_2f^2\hat{\lambda}-(s_1+s_2-2)\lambda\big]g_B.
	\end{equation}
As dimension of $B$ is one therefore $S_B=\lambda$, thus after taking the trace of $(4.8)$ and using this, we have
\begin{equation}
\hat{\lambda}f^2=\lambda.
\end{equation}
This provides proof of both $(a)$ and $(b)$.
\end{proof}
\begin{theorem}
	Let $(M=B\times_{f}F,\Pi,g^f)$ be a Einstein Poisson warped product space with $dim(B)=s_1=1$ and $s_2>1$, $f$ is Casimir function on $B$ and $\hat{\lambda}<0$. Then\\
	$\textbf{(a).}$ If $\lambda>0$, does not exist warping function. \\
	$\textbf{(b).}$ If $\lambda<0$, $f=\sqrt{\frac{\lambda}{\hat{\lambda}}}$.
\end{theorem}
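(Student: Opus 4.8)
The plan is to run the argument of Theorem 4.5 verbatim and only change the final sign analysis, since the two statements differ solely in the sign of the fibre Einstein constant $\hat{\lambda}$. First I would invoke Theorem 4.1: as $M$ is an Einstein Poisson warped product space, its Ricci tensor satisfies conditions (a)--(c) there, which I combine—exactly as in the reduction producing equation (4.7)—into the single scalar identity
$$Ric_B-\frac{s_2}{f^2}\big[2A-fH_{\Pi_B}^f\big]=\frac{1}{2}\Big[S_B-\frac{s_2(s_2+3)}{f^2}||J_1df||_B^{2}-\frac{2s_2}{f}\Delta^{\mathcal{D}^B}(f)+s_2f^2\hat{\lambda}-(s_1+s_2-2)\lambda\Big]g_B.$$

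Next I would apply the Casimir hypothesis. Since $f$ is Casimir on $B$ precisely when $J_1df=0$, every term carrying $J_1df$, the Hessian $H_{\Pi_B}^f$, or the Laplacian $\Delta^{\mathcal{D}^B}(f)$ drops out, and the identity collapses to
$$Ric_B=\frac{1}{2}\big[S_B+s_2f^2\hat{\lambda}-(s_1+s_2-2)\lambda\big]g_B.$$

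The decisive step is the trace reduction. Because $\dim B=s_1=1$ the base is automatically Einstein with $S_B=\lambda$; taking the $g_B$-trace of the last relation (so $Ric_B\mapsto S_B$ and $g_B\mapsto s_1=1$) and substituting $S_B=\lambda$ collapses the $s_2$-dependent terms after cancellation to the clean constraint
$$\hat{\lambda}f^2=\lambda.$$

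Finally I would read off the two cases from this constraint together with the positivity of the warping function, $f>0$, hence $f^2>0$. With $\hat{\lambda}<0$ by hypothesis, $f^2=\lambda/\hat{\lambda}$ is positive if and only if $\lambda<0$; thus for $\lambda>0$ no admissible $f$ exists, giving (a), while for $\lambda<0$ the unique choice is $f=\sqrt{\lambda/\hat{\lambda}}$, giving (b). I do not expect a genuine obstacle, as the whole content is the sign analysis of $\hat{\lambda}f^2=\lambda$; the only points demanding care are recording that the Casimir condition is equivalent to $J_1df=0$ and that the constant written $\hat{\lambda}$ is the fibre Einstein constant $\tilde{\lambda}$ of Theorem 4.1.
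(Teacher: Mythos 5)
Your proposal is correct and follows the paper's own route exactly: the paper derives the constraint $\hat{\lambda}f^2=\lambda$ (its equation (4.9)) in the proof of the preceding $\hat{\lambda}>0$ theorem via the same reduction from Theorem 4.1, the Casimir condition $J_1df=0$, and the trace of (4.8) with $S_B=\lambda$, and then proves the present statement by the identical sign analysis of that constraint. Your only additions are to spell out the trace computation and the positivity of $f$, which the paper leaves implicit.
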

\begin{proof}
	Proof of both follows from (4.9).
\end{proof}
\subsection{Einstein Poisson warped product space with dim B$\geq2$} In the following two theorems, we will provide the information of warping function for Einstein Poisson warped product space $(M=B\times_{f}F,g^f,\Pi)$ when $dim(B)=s_1\geq 2$ and $f$ is a Casimir function on $B$.
\begin{theorem}
	Let $(M=B\times_{f}F,g^f,\Pi)$ be an Einstein Poisson warped product space with $dim(B)=s_1\geq2$, $f$ is Casimir function on $B$ and $\hat{\lambda}>0$. Then\\
	$\textbf{(a).}$ If $\lambda>0$, $f=\sqrt{\frac{\lambda}{\hat{\lambda}}}$.\\
	$\textbf{(b).}$ If $\lambda<0$, does not exist warping function.
\end{theorem}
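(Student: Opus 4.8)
The plan is to follow the strategy already used for Theorem 4.5, the only genuinely new point being that for $s_1\geq2$ one can no longer appeal to the trivial one-dimensional identity $S_B=\lambda$ and must instead read the scalar curvature of the base off the Einstein condition itself. First I would invoke the Einstein criteria of Theorem 4.1 and impose the hypothesis that $f$ is a Casimir function, i.e. $J_1df=0$. This annihilates every quantity built out of $J_1df$, namely $A$, $||J_1df||_B^{2}$, $\Delta^{\mathcal{D}^B}(f)$ and the contravariant Hessian $H_{\Pi_B}^f$, so that the combined relation (4.7) collapses to equation (4.8),
\begin{equation*}
Ric_B=\frac{1}{2}\big[S_B+s_2f^2\hat\lambda-(s_1+s_2-2)\lambda\big]g_B.
\end{equation*}

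Next I would extract the scalar curvature of $B$. Under the same Casimir hypothesis, condition (a) of Theorem 4.1 reduces to $Ric_B=\lambda g_B$, so $(B,g_B)$ is Einstein with Einstein constant $\lambda$; taking the $g_B$-trace over the $s_1$-dimensional base then gives $S_B=s_1\lambda$. This is precisely the step that, for $\dim B=1$, was supplied by the identity $S_B=\lambda$. Substituting $S_B=s_1\lambda$ into (4.8) and comparing the scalar coefficients on both sides, every $\lambda$-term cancels except a factor proportional to $s_2(\lambda-f^2\hat\lambda)$, which leaves exactly the relation (4.9), namely $\hat\lambda f^2=\lambda$. Equivalently, condition (c) of Theorem 4.1 under the Casimir hypothesis already yields $\hat\lambda=\lambda/f^2$ directly, bypassing the trace computation.

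Finally I would read off the two cases from $f^2=\lambda/\hat\lambda$ under the standing assumption $\hat\lambda>0$. If $\lambda>0$ the right-hand side is positive, so $f=\sqrt{\lambda/\hat\lambda}$ is a genuine positive warping function and (a) holds; if $\lambda<0$ the right-hand side is negative, which is impossible for a real positive function $f$, so no warping function exists and (b) holds.

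I do not anticipate any serious obstacle, since all the analytic content has already been concentrated in Theorem 4.1, and the Casimir condition $J_1df\equiv0$ removes every differential term in $f$. The one point deserving care is the one highlighted above: for $s_1\geq2$ the value $S_B=s_1\lambda$ must be obtained from the Einstein property of the base rather than from a dimension count, after which the passage to $\hat\lambda f^2=\lambda$ and the resulting case distinction are purely algebraic and identical in spirit to the argument behind (4.9).
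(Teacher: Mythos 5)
Your proposal is correct and follows essentially the same route as the paper's proof: impose the Casimir hypothesis so that (4.7) collapses to (4.8), use the Einstein property of the base (giving $S_B=s_1\lambda$ for $s_1\geq2$), and trace (4.8) to obtain $\hat{\lambda}f^2=\lambda$, from which the sign analysis under $\hat{\lambda}>0$ yields both (a) and (b). Your side remark that condition (c) of Theorem 4.1 already gives $\hat{\lambda}=\lambda/f^2$ directly once $J_1df=0$ is a legitimate shortcut, but it does not change the substance of the argument.
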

\begin{proof}
	As $dim(B)=s_1\geq2$ therefore $S_B=\lambda s_1$, thus after taking the trace of $(4.8)$ and using this, we have
	\begin{equation}
	\hat{\lambda}f^2=\lambda
	\end{equation}
This provides the proof of both both (a) and (b).
\end{proof}
\begin{theorem}
	Let $(M=B\times_{f}F,g^f,\Pi)$ be an Einstein Poisson warped product space with $dim(B)=s_1\geq2$, $f$ is Casimir function on $B$ and $\hat{\lambda}<0$. Then\\
	$\textbf{(a).}$ If $\lambda>0$, does not exist warping function. \\
	$\textbf{(b).}$ If $\lambda<0$, $f=\sqrt{\frac{\lambda}{\hat{\lambda}}}$.
\end{theorem}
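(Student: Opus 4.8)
The plan is to reduce the statement to the single scalar identity already isolated in the preceding theorem, namely equation (4.10),
\[
\hat{\lambda}f^{2}=\lambda,
\]
and then to read off both conclusions by a sign analysis exploiting the positivity of the warping function. First I would recall how this identity arises in the present setting: under the Casimir hypothesis $J_1df=0$, condition (a) of Theorem 4.1 collapses to equation (4.8),
\[
Ric_B=\tfrac{1}{2}\big[S_B+s_2f^{2}\hat{\lambda}-(s_1+s_2-2)\lambda\big]g_B.
\]
Taking the $g_B$-trace of this over the $s_1$-dimensional cotangent space, and using that $Ric_B=\lambda g_B$ forces $S_B=\lambda s_1$ when $\dim B=s_1\geq 2$, causes all the $\lambda$-terms to cancel and leaves exactly $\hat{\lambda}f^{2}=\lambda$. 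This is the only computational step, and it is identical to the one carried out in the case $\hat{\lambda}>0$.

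With this relation in hand the remainder is immediate, since $f>0$ gives $f^{2}>0$. For part (a), suppose $\lambda>0$; then the right-hand side of $\hat{\lambda}f^{2}=\lambda$ is positive, whereas the left-hand side is the product of $\hat{\lambda}<0$ and $f^{2}>0$ and is therefore negative, a contradiction. Hence no admissible warping function can exist. For part (b), suppose $\lambda<0$; then $\lambda/\hat{\lambda}$ is a quotient of two negative numbers and is positive, so $f^{2}=\lambda/\hat{\lambda}$ admits the genuine positive solution $f=\sqrt{\lambda/\hat{\lambda}}$, which satisfies (4.10) and meets the positivity requirement on the warping function.

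I expect no real analytic obstacle here: once (4.10) is available the theorem is a two-line sign argument, and the only place warranting care is the trace bookkeeping that produces (4.10). In particular I would verify that the cancellation of the $\lambda$-terms genuinely relies on $S_B=\lambda s_1$, which holds precisely because $\dim B=s_1\geq 2$ makes $B$ an Einstein space with $Ric_B=\lambda g_B$; this is the step that formally separates the $s_1\geq 2$ case from the $s_1=1$ case treated earlier, even though both funnel into the same relation $\hat{\lambda}f^{2}=\lambda$.
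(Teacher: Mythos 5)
Your proposal is correct and follows the paper's own route exactly: the paper proves this theorem by citing equation (4.10), which is obtained (in the proof of the companion theorem for $\hat{\lambda}>0$) precisely as you describe, by tracing (4.8) with $S_B=\lambda s_1$ to get $\hat{\lambda}f^{2}=\lambda$, after which the sign analysis using $f>0$ is immediate. Your write-up is in fact slightly more complete than the paper's, since you make the trace cancellation and the positivity argument explicit rather than deferring to the previous proof.
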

\begin{proof}
	Proof of both (a) and (b) follows from (4.10).
\end{proof}


\begin{thebibliography}{}
	\bibitem{mbo} M. Boucetta, {\em Compatibilit$\acute{e}$s des structures pseudo-riemanniennes et des structures de Poisson}, C. R. Acad. Sci. Paris, 333(2001), 763-768.
	\bibitem{mbo2} M. Boucetta, {\em Poisson manifolds with compatible pseudo-metric and pseudo-Riemannian Lie algebras,}, Differential Geometry and its Applications, 20(2004), 279-291.
	\bibitem{lota} R. Bishop and B. O'Neill, {\em Manifolds of negative curvature}, Trans. Am. Math. Soc. 145(1969), 1-49.
	\bibitem{rfl} R. L. Fernandes, {\em Connections in Poisson geometry I: Holonomy and invariants}, J. Differential Geometry, 54(2000), 303-365.
	\bibitem{ivs} I. Vaisman, {\em Lectures on the Geometry of Poisson Manifolds}, Progress in Mathematics, vol.118, Birkhauser Verlag, Basel, 1994.
    \bibitem{jpn} J. P. Dufour, N. T. Zung, {\em Poisson Structures and Their Normal Forms}, Progress in Mathematics, vol.242, Birkhauser Verlag, Basel, 2005.
    \bibitem{yar} Y. Ait Amrane, R. Nasri, A. Zeglaoui, {\em Warped Poisson brackets on warped products}, Journal of Geometric Mechanics, 6(3), (2014), 279-296.
   \bibitem{zs} Z. Saassai, {\em A Laplace operator for Poisson manifolds}, Differential Geometry and its Applications, doi.org/10.1016/j.difgeo.2019.101576.
   \bibitem{rnm} R. Nasri, M. Djaa, {\em On the geometry of the product Riemannian manifold with the Poisson structure}, International Electronic Journal of Geometry, 3 (2010), 1-14.
   \bibitem{jkp} J. K. Beem, P. Ehrlich, {\em Global Lorentzian Geometry.}, Marcel-Dekker Inc., First edition, New York, 1981.
   \bibitem{jke} J. K. Beem, P. Ehrlich, {\em Global Lorentzian Geometry.}, Marcel-Dekker Inc., Second edition, New York, 1981.
   \bibitem{sba} S. Pahan, B. Pal and A. Bhattacharyya, {\em On Ricci flat warped products with a quarter-symmetric connection}, Journal of Geometry, 107 (2016), 627-634.
   \bibitem{ddo} D. Djebbouri, S. Ouakkas, {\em Product of statistical manifolds with doubly warped product}, Gen. Math. Notes, 31(2), (2015), 16-28.
\end{thebibliography}
\end{document}